\documentclass[12pt,a4paper,twoside]{article}
\RequirePackage{filecontents}
\usepackage[utf8]{inputenc}
\usepackage[UKenglish]{babel}
\usepackage[T1]{fontenc}
\usepackage{fancyhdr}
\usepackage[left=2.5cm,right=2.5cm,top=1.5cm,bottom=1.5cm]{geometry}
\usepackage{etex}
\usepackage[etex=true,export]{adjustbox}
\usepackage{float}
\usepackage{xkeyval}
\usepackage{mathtools}
\usepackage{stmaryrd}
\usepackage{amsmath,accents}
\usepackage{amsthm}
\usepackage{amsfonts}
\usepackage{amssymb}
\usepackage{authblk}
\usepackage{amssymb, amsfonts, amsmath, amsthm, times}
\usepackage{epsfig}
\usepackage{lipsum}
\usepackage{graphicx}
\usepackage[sort, numbers]{natbib}
\usepackage{placeins}
\usepackage{hyperref}
\usepackage[dvipsnames]{xcolor}
\usepackage{ulem}
\usepackage{bbm}

\newtheoremstyle{theoremdd}
  {\topsep}
  {\topsep}
  {\normalfont}
  {0pt}
  {\normalfont}
  {.}
  {4pt}
  {\thmname{#1}\thmnumber{ #2}\textnormal{\thmnote{ (#3)}}}

\theoremstyle{theoremdd}
\newtheorem{mydef}{Definition}[section]
\newtheorem{rmk}{Remark}[section]
\newtheorem{empl}{Example}[section]

\newcommand{\R}{\mathbbm{R}}
\newcommand{\Z}{\mathbbm{Z}}

\theoremstyle{definition}
\newtheorem{conj}{Conjecture}[section]
\theoremstyle{plain}
\newtheorem{theo}{Theorem}[section]
\newtheorem{lem}[theo]{Lemma}
\newtheorem{prop}[theo]{Proposition}

\newtheorem{qst}{Question}

\pdfpkresolution=8000
\binoppenalty=\maxdimen
\relpenalty=\maxdimen
\tracingstats=0
\title{INFINITE FAMILIES OF NON-LEFT-ORDERABLE $L$-SPACES}
\author{HAMID ABCHIR and MOHAMMED SABAK}
\date{}
\begin{document}
\maketitle  
\begin{abstract}
For each connected alternating tangle, we provide an infinite family of non-left-orderable $L$-spaces. This gives further support for Conjecture \cite{boyer} of Boyer, Gordon, and Watson that is a rational homology $3$-sphere is an $L$-space if and only if it is non-left-orderable. These $3$-manifolds are obtained as Dehn fillings of the double branched covering of any alternating encircled tangle. We give a presentation of these non-left-orderable $L$-spaces as double branched coverings of $S^3$, branched over some specified links that turn out to be hyperbolic. We show that the obtained families include many non-Seifert fibered spaces. We also show that these families include many Seifert fibered spaces and give a surgery description for some of them. In the process we give another way to prove that the torus knots $T(2,2m+1)$ are $L$-space-knots as has already been shown by Ozsv{\'a}th and Szab{\'o} in \cite{ozsvath}.
\end{abstract}
\footnote{2020 Mathematics Subject Classification. 57K10, 57M12}

\section{Introduction}
A group $G$ is said to be \textit{left-orderable} if there exists a total order $<$ on the elements of $G$ such that given any two elements $a$ and $b$ in $G$, if $a < b$ then $ca < cb$ for any $c \in G$. By convention, the trivial group is non-left-orderable.

One interesting problem studied by topologists is the relationship between the topology or geometry of a $3$-manifold and the left-orderability of its fundamental group. In 2005, Boyer, Rolfsen, and Wiest showed in \cite{boyer2} that if the fundamental group of a $3$-manifold $M$ is non-left-orderable, then $M$ is a rational homology $3$-sphere. 

An interesting familiy of rational homology $3$-spheres is that of $L$-spaces which was introduced in 2005 by Ozsv{\'a}th and Szab{\'o} \cite{ozsvath2}. Recall that a rational homology $3$-sphere $M$ is an $L$\textit{-space} if the rank of the Heegaard Floer homology group $\widehat{HF}(M)$ is equal to $\left| H_1(M;\mathbb{Z}) \right|$, the cardinal of the first homology group of $M$. Ozsv{\'a}th and Szab{\'o} showed in \cite{ozsvath} that Lens spaces are $L$-spaces. In particular, the $3$-sphere $S^3$ is an $L$-space. According to the following conjecture, it seems that $L$-spaces are the only rational homology $3$-spheres which satisfy the converse of the result showed by Boyer, Rolfsen, and Wiest cited above.

\begin{conj}[$L$-space conjecture \cite{boyer}]
The fundamental group of a rational homology $3$-sphere $M$ is non-left-orderable if and only if $M$ is an $L$-space.
\label{conj1}
\end{conj}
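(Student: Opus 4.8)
The plan is to prove the biconditional not by confronting its two stated conditions head-on, but by interposing a third, geometric condition and establishing the full \emph{$L$-space trinity}: for a rational homology $3$-sphere $M$,
\[
\pi_1(M) \text{ is left-orderable} \iff M \text{ admits a co-orientable taut foliation} \iff M \text{ is not an } L\text{-space}.
\]
Negating the outer two equivalences then returns exactly Conjecture \ref{conj1}. First I would assemble the two implications that are already in hand. On the Heegaard Floer side, a co-orientable taut foliation on a closed $M$ perturbs (Eliashberg--Thurston, with the $C^0$ smoothing refinements of Bowden and of Kazez--Roberts) to a tight contact structure whose Ozsv{\'a}th--Szab{\'o} contact invariant is non-zero; their non-vanishing and surgery theorems then force $\mathrm{rank}\,\widehat{HF}(M) > |H_1(M;\mathbb{Z})|$, so $M$ is not an $L$-space. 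On the ordering side, a co-orientable taut foliation produces, through the universal-circle construction of Thurston and Calegari--Dunfield, a faithful action of $\pi_1(M)$ on a circle; since $M$ is a rational homology sphere $H^2(M;\mathbb{R})=0$, the real Euler class of this action vanishes, the action lifts to a faithful action of $\pi_1(M)$ on $\mathbb{R}$ by orientation-preserving homeomorphisms, and for a countable group this is equivalent to left-orderability.

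The remaining content — and the entire difficulty — lies in the two converse implications, each of which demands that we \emph{produce} a taut foliation. I would attack ``not an $L$-space $\Rightarrow$ co-orientable taut foliation'' by trying to read off an essential branched surface, or a sutured-manifold hierarchy in the sense of Gabai, directly from the structure of $\widehat{HF}(M)$; and ``left-orderable $\Rightarrow$ co-orientable taut foliation'' by first upgrading the bare left-ordering to a dynamically rich action of $\pi_1(M)$ on $\mathbb{R}$ or $S^1$ and then realising that action geometrically as the holonomy of a foliation. Where such constructions are available they close the triangle cleanly and deliver the theorem on the relevant class of manifolds.

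The step I expect to be the genuine obstacle — indeed the reason this statement is a conjecture rather than a theorem — is precisely this passage \emph{to} a taut foliation. At present there is no general mechanism converting the mere vanishing of reduced Heegaard Floer homology, or the bare existence of a left-ordering, into a concrete foliation: an arbitrary left-ordering need carry no continuity or dynamical structure, and $\widehat{HF}(M)$ records no obvious transverse geometry. The known successes are confined to classes carrying extra structure — Seifert fibered spaces via equivariant gluing (Boyer--Gordon--Watson, Boyer--Clay), graph manifolds via bordered Floer homology and gluing of orderings (Hanselman--Rasmussen--Rasmussen--Watson), and fibered or sufficiently large manifolds via Gabai's hierarchies — and none of these techniques is known to survive the passage to hyperbolic rational homology spheres. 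Closing that gap in full generality is exactly what a complete proof would require, and is the part I would not expect to carry through; it is for this reason that the present paper instead contributes new verified instances of the trinity rather than a proof of the conjecture itself.
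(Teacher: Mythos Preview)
The statement you were asked to prove is not a theorem in this paper: it is the $L$-space conjecture, stated as Conjecture~\ref{conj1} and attributed to Boyer--Gordon--Watson. The paper offers no proof of it, and none is expected --- the authors' contribution is precisely to exhibit new infinite families of rational homology spheres on which both sides of the biconditional hold, thereby adding to the body of evidence.

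Your proposal is entirely self-aware on this point, and your final paragraph is exactly right: the passage from ``not an $L$-space'' or from ``left-orderable'' to the existence of a taut foliation is the open problem, and no general mechanism is known. What you have written is therefore not a proof but an accurate survey of the conjectural trinity and of where the genuine obstruction lies. That is valuable context, but it should not be labeled a proof proposal: the two converse implications you flag as ``the remaining content'' are wide open in general, and nothing in your outline --- reading a branched surface off $\widehat{HF}$, or upgrading an arbitrary left-order to foliation holonomy --- is currently a workable strategy outside the special classes (Seifert fibered, graph manifolds) you already mention. Since the paper itself makes no attempt at the conjecture and instead proves Theorems~\ref{theo1} and~\ref{theo2} for its specific families, there is no ``paper's own proof'' to compare against; your write-up and the paper agree that Conjecture~\ref{conj1} remains open.
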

In 2013,  Boyer, Gordon, and Watson showed that this conjecture is true for Seifert fibered spaces and non-hyperbolic geometric 3–manifolds \cite{boyer}. Many known families of $L$-spaces have non-left-orderable fundamental groups. These families include the double branched coverings of non-split alternating links and those of genus two positive knots (\cite{greene}, \cite{ito}). On the other hand, there are many examples of $3$-manifolds with non-left-orderable fundamental groups detected by Dabkowski, Przytycki and Togha in \cite{dkabkowski}, Roberts and Shareshian in \cite{roberts}, and Roberts, Shareshian and Stein in \cite{roberts2}. Later on, Clay and Watson in \cite{clay}, and Peters in \cite{peters} showed that all these $3$-manifolds are $L$-spaces.

In \cite{ito}, Ito developed a method to show that the fundamental group of the double branched covering of a non-split link is non-left-ordrable by using the notion of \textit{Brunner's coarse presentation} that looks like usual group presentations. A Brunner's coarse presentation is given by a set of generators and relations, but inequalities are allowed as relations. It is derived from \textit{Brunner's presentation} introduced in \cite{brunner}.

In the present paper, we consider the double branched covering of an encircled alternating tangle whose boundary is a torus. Then by using some specific Dehn fillings we get rational homology $3$-spheres which will be $L$-spaces. We show that the fundamental groups of these $L$-spaces are non-left-orderable by using the coarse Brunner's presentation. So, we give further support for the $L$-space conjecture. Some of these obtained $3$-manifolds are non-Seifert fibered spaces.

More precisely, we consider the alternating encirclement of a connected alternating tangle $T$ denoted by $(B,\tau(T))$, where $B$ is the $3$-ball and $\tau(T)$ is the tangle $T$ encircled by a trivial simple close curved as in Fig. \ref{ae}. We denote by $\Sigma_2(B,\tau(T))$ the double branched covering of $(B,\tau(T))$. It is a $3$-manifold whose boundary is a torus. We choose a particular simple closed curve $\alpha$ on $\partial(\Sigma_2(B,\tau(T)))$ called a \textit{slope}. The Dehn filling operation consists in gluing a solid torus $V$ to $\Sigma_2(B,\tau(T))$ by identifying the meridian curve of $\partial V$ with $\alpha$. The obtained $3$-manifold is denoted by $\Sigma_2(B,\tau(T))(\alpha)$. We use the Monesinos trick which gives a presentation of that manifold as the double branched covering of $S^3$, the branched set of which is obtained by attaching a rational tangle to $\tau(T)$ in a prescribed way \cite{Montesinos}, and then we show the main following result.

\begin{theo}
 If $T$ is a connected alternating tangle and if $(B,\tau(T))$ is its alternating encirclement, then for infinitely many slopes $\alpha$ on the torus $\partial(\Sigma_2(B,\tau(T)))$, the manifolds $\Sigma_2(B,\tau(T))(\alpha)$ are $L$-spaces with non-left-orderable fundamental groups. Moreover, several of these manifolds are non-Seifert fibered.
\end{theo}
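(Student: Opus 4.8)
The plan is to split the statement into its three assertions and to organise everything around the Montesinos trick already invoked above. Fix a connected alternating tangle $T$ and its alternating encirclement $(B,\tau(T))$. Since $\partial\tau(T)$ consists of four points, $\partial\Sigma_2(B,\tau(T))$ is a torus, and a framing of $\partial B$ identifies the slopes $\alpha$ on it with elements $p/q\in\mathbb{Q}\cup\{\infty\}$; by Montesinos' theorem \cite{Montesinos} one then has $\Sigma_2(B,\tau(T))(\alpha)\cong\Sigma_2(S^3,L_{p/q})$, where $L_{p/q}$ is the link obtained from $\tau(T)$ by inserting the rational tangle of slope $p/q$ into its free four-ended region. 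The first step is therefore bookkeeping: write down an explicit diagram of $L_{p/q}$, namely a closure of $T$ through a twist region of the appropriate number of crossings together with the arc coming from the encircling circle.

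For the $L$-space assertion I would show that, because $T$ is connected and alternating, the diagram of $L_{p/q}$ just described is a connected \emph{alternating} diagram for all slopes obtained by inserting at least $N$ half-twists of a suitable sign: the alternating colouring of $T$ propagates across the inserted twist region, and after an isotopy also across the encircling arc, while connectedness of $T$ prevents the resulting diagram from being split. Non-split alternating links have $L$-space double branched covers by Ozsv{\'a}th and Szab{\'o}, so $\Sigma_2(B,\tau(T))(\alpha)$ is an $L$-space for these infinitely many slopes. (If one wants the whole set of $L$-space slopes, one may combine this with the fact that the $L$-space filling slopes of a three-manifold with torus boundary form an interval once two of them are known.)

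For non-left-orderability I would, for the same family, follow Ito's method \cite{ito}: from the alternating diagram of $L_{p/q}$ one reads off Brunner's presentation \cite{brunner} of $\pi_1\bigl(\Sigma_2(S^3,L_{p/q})\bigr)$, passes to the associated coarse presentation in which the relators become inequalities between words in the generators, and argues that these inequalities cannot all hold in a nontrivial left-ordered group, since the alternating pattern of $T$ forces a cyclic chain $1<g_{1}<g_{2}<\cdots<g_{k}<1$ of positivity relations. Connectedness of $T$ is what guarantees that the generators attached to its arcs are tied together by enough such inequalities to close the contradiction, independently of how many half-twists were inserted. This is the step I expect to be the main obstacle, because it requires a careful choice of Wirtinger-type generators and a sign analysis carried consistently through both the tangle $T$ and the encircling region; it is also where the hypothesis ``connected alternating tangle'' is really used.

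Finally, for the geometric refinement I would argue that for suitable $T$ the manifold $\Sigma_2(B,\tau(T))$ is hyperbolic --- for instance because the corresponding encircled-tangle link is a prime alternating non-torus link, hence has hyperbolic complement by Menasco, from which hyperbolicity of the branched cover can be extracted via the orbifold theorem --- so that by Thurston's hyperbolic Dehn surgery theorem all but finitely many of the fillings $\Sigma_2(B,\tau(T))(\alpha)$ are hyperbolic and in particular non-Seifert fibered. An alternative, more hands-on route is to exhibit explicit links $L_{p/q}$ in the family that are hyperbolic and not Montesinos links, so that their double branched covers are neither lens spaces nor small Seifert fibered spaces. Either way, combining the three parts proves the theorem.
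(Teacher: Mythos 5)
Your plan splits the statement correctly, but it diverges from what the paper actually proves and has real gaps in two of the three parts. For the $L$-space and non-left-orderability assertions you restrict to slopes for which the inserted twist region keeps the whole diagram of $L_{p/q}$ alternating. Those slopes give non-split alternating branch links, i.e.\ exactly the case the paper explicitly sets aside as already known (their double branched covers are $L$-spaces by \cite{ozsvath2} and non-left-orderable by \cite{boyer}, \cite{greene}); this would suffice for the literal ``infinitely many slopes'' claim, but only by citing those results. Instead you propose to rerun Ito's method and leave the decisive step --- ``the alternating pattern forces a cyclic chain $1<g_1<\cdots<g_k<1$'' --- as an acknowledged obstacle. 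That step is precisely the mathematical content of the paper's argument (the sign lemma, Lemma \ref{lem1}, stating that $R_2$ and $R_3$ have opposite signs, followed by the two-case analysis over the maximal region of $\Gamma_T$), and nothing in your sketch substitutes for it. Moreover the paper's new examples are the slopes $\frac{p}{p+q}$ with $0<p/q\le 1$, whose branch links $N(-\frac{p}{p+q}+\tau(T))$ are \emph{not} alternating but only quasi-alternating; establishing this requires the determinant identity $N_{\tau(T)}=D_{\tau(T)}=4(N_T+D_T)$ and the quasi-alternating twisting results of \cite{abchir} and \cite{champanerkar}, for which your proposal has no replacement.

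The non-Seifert-fibered part is where the proposal genuinely fails. The inference ``the branch set is hyperbolic, hence the double branched cover is hyperbolic (or at least not Seifert fibered)'' is false: hyperbolic two-bridge links have lens space double covers and hyperbolic Montesinos links have Seifert fibered double covers, so neither Menasco plus the orbifold theorem nor Adams-type hyperbolicity of the augmented links (which the paper does prove, in Lemma \ref{lem3}, but never uses for this purpose) yields non-Seifert-ness. Likewise, $\Sigma_2(B,\tau(T))$ is not hyperbolic for every connected alternating $T$ --- for rational $T$ the fillings in question are Seifert fibered (Proposition \ref{prop1}), which is incompatible with Thurston's finiteness of exceptional fillings --- so the ``suitable $T$'' in your first route must be exhibited and its hyperbolicity justified, which you do not do. The ingredient you are missing is the dichotomy that a link whose double branched cover is Seifert fibered must be a Seifert link or a Montesinos link (\cite{mecchia}); the paper combines this with Meier's criteria \cite{meier} and the explicit tangles $T_n$ (Lemma \ref{lemadd}) to rule out both possibilities. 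Your ``hands-on alternative'' (hyperbolic and non-Montesinos) could be repaired by invoking exactly this dichotomy, since hyperbolicity excludes Seifert links, but as written it neither states the dichotomy nor verifies non-Montesinos-ness for any concrete member of your family.
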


We will give more detailed statements in Paragraph \ref{Main_results}.\\
This paper is organized as follows. In the second section we give a brief overview of the main tools needed in the paper: Tangles, rational tangles, Montesinos links, quasi-alternating links, Dehn fillings, Montesinos trick and the Coarse Brunner's presentation. Then  we state our main results in the second section and give some applications. The third section is devoted to proofs of the main theorems. At the end of the paper, we ask two interesting questions raised by some of our results.\\

AKNOWLEDGEMENTS. We would like to thank Michel Boileau for his advice and helpful feedback. Thanks also to the referee for his/ her valuable comments and suggestions which allowed to improve this paper.

\section{Preliminaries}

\subsection{Tangles}
In this paper, we call a \textit{tangle} $T$ any pair $(B,A)$ where $B$ is a $3$-ball and $A$ is properly embedded $1$-dimensional manifold in $B$ and which meets the boundary of $B$ in four distinct points. Two tangles $T$ and $T^\prime$ are \textit{equivalent} if there is an ambient isotopy of the $3$-ball which is the identity on the boundary and which takes $T$ to $T^\prime$.\\
We assume that the four endpoints lie in the great circle of the boundary sphere of a $3$-ball $B^3$ which joins the two poles. That great circle bounds a two disk $B^2$ in $B^3$. We consider a regular projection of $B^3$ on $B^2$. The image of a tangle $T$ by that projection in which the height information is added at each of the double points is called a \textit{tangle diagram} of $T$. Two tangle diagrams will be equivalent if they are related by a finite sequence of planar isotopies and Reidemeister moves in the interior of the \textit{projection disk} $B^2$. Two tangles will be \textit{equivalent} iff they have equivalent diagrams.\\
Depending on the context we will denote by $T$ the tangle or its projection.

The four endpoints of the arcs in the diagram are usually labeled $NW_T$,$NE_T$,$SE_T$, and $SW_T$ with symbols referring to the compass directions as in the Fig. \ref{figfig2}.

\begin{figure}[H]
\centering
\includegraphics[width=0.2\linewidth]{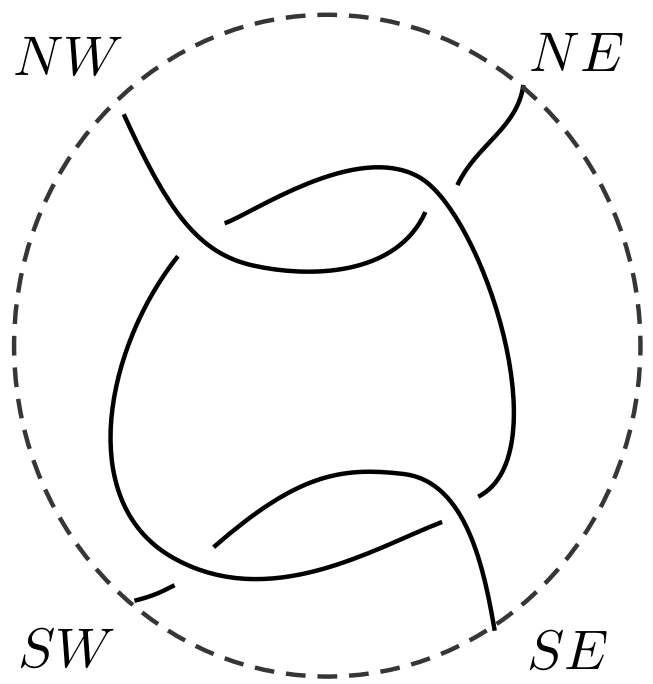}   
\caption{An alternating tangle diagram $T$.}
\label{figfig2}
\end{figure}

A tangle diagram $T$ is said to be \textit{disconnected} if either there exists a simple closed curve embedded in the projection disk, called a \textit{splitting loop}, which do not meet $T$, but encircles a part of it, or there exists a simple arc properly embedded in the projection disk, called a \textit{splitting arc}, which do not meet $T$ and splits the projection disk into two disks each one containing a part of $T$. A tangle diagram is \textit{connected} if it is not disconnected.

A tangle diagram $T$ is said to be \textit{locally knotted} if there exists a simple closed curve $C$ embedded in the interior of the disk projection, called a \textit{factorizing circle} of $T$, which meets $T$ transversally at two points and bounds a disk inside the disk projection which meets $T$ in a knotted spanning arc.

We adopt the notations used for rational tangles by Goldman and Kauffman in \cite{goldman} and Kauffman and Lambroupoulou in \cite{kauffman}. In Fig. \ref{fig4}, we recall some operations defined on tangles.

\begin{figure}[H]
\centering
\includegraphics[width=0.7\linewidth]{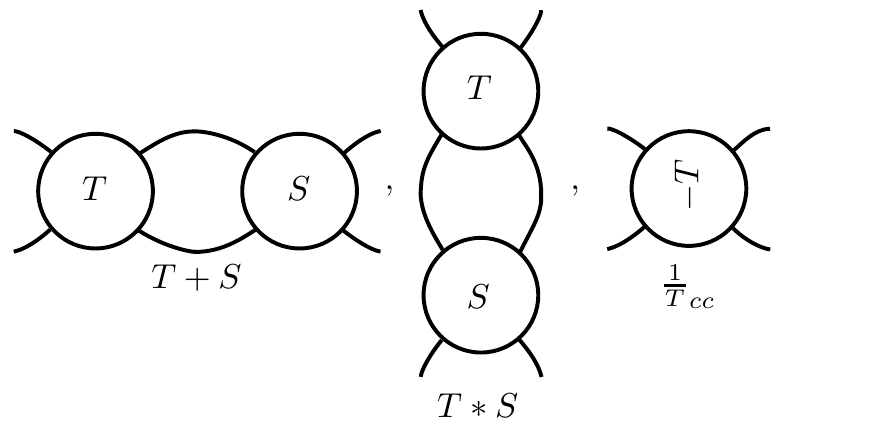}   
\caption{Some operations on tangle diagrams.}
\end{figure}

A ($-\pi$) rotation of a tangle diagram $T$ in the horizontal (respt. vertical) axis is called \textit{horizontal Flip} (respt. \textit{vertical Flip} and will be denoted by $T_h$ (respt. $T_v$). That is the tangle diagram obtained by rotating the ball containing $T$ in space around the horizontal (respt. vertical) axis as shown in Fig. \ref{figfig4bis} and then project the new tangle by the same projection function as that used to get $T$. Note that if $T$ is an alternating tangle diagram, then $T_h$ is also alternating. Note that the Flip operation preseves the isotopy class of a rational tangle (Flip Theorem 1. \cite{goldman}).

\begin{figure}[H]
\centering
\includegraphics[width=0.7\linewidth]{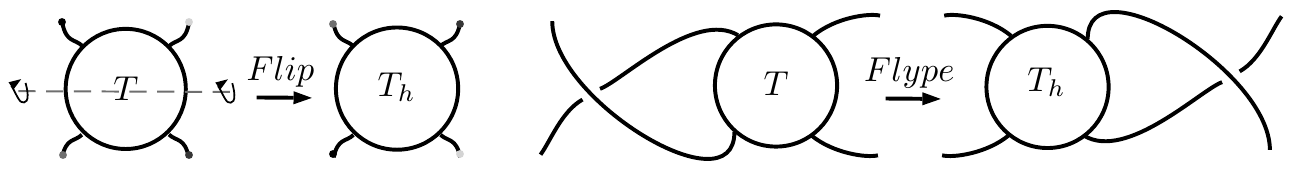}
\caption{Flip and Flype moves.}
\label{figfig4bis}
\end{figure}

A \textit{Flype} is an isotopy of tangles that is depicted by the Fig. \ref{figfig4bis}.
 
A tangle diagram $T$ provides two link diagrams: the \textit{Numerator} of $T$, denoted by $n(T)$, which is obtained by joining with simple arcs the two upper endpoints $(NW_T,NE_T)$ and the two lower endpoints $(SW_T,SE_T)$ of $T$, and the \textit{Denominator} of $T$, denoted by $d(T)$, which is obtained by joining with simple arcs each pair of the corresponding top and bottom endpoints $(NW_T,SW_T)$ and $(NE_T,SE_T)$ of $T$ (see Fig. \ref{figfig5}). We denote $N(T)$ and $D(T)$ respectively the corresponding links. We denote $N(T)$ and $D(T)$ respectively the corresponding links. We also denote by $N_T$ and $D_T$ the respective determinants of the links $N(T)$ and $D(T)$.

\begin{figure}[H]
\centering
\includegraphics[width=0.5\linewidth]{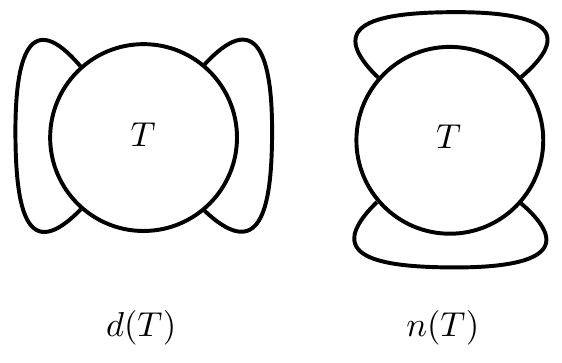}   
\caption{The denominator and the numerator of a tangle diagram $T$.}
\label{figfig5}
\end{figure}

A tangle diagram $T$ is called \textit{alternating} if the ``over'' or ``under'' nature of the crossings alternates as one moves along any arc of $T$. A tangle is said to be \textit{alternating} if it admits an alternating diagram. If $T$ is a connected alternating tangle diagram such that the link diagrams $n(T)$ and $d(T)$ are both non-split and reduced, then $T$ is said to be a \textit{strongly alternating diagram}. 

Let $T$ be an alternating connected tangle diagram. Consider the arc of $T$ which have $NW_T$ as an endpoint. Suppose that when we move along that arc starting at $NW_T$ we pass below at the first encountered crossing. Then the arc of $T$ which ends at the point $SE_T$ will also pass below at the last encountered crossing before reaching $SE_T$ and the arc of $T$ which starts at $NE_T$ will pass over at the first encountered crossing. It is easy to see that the arcs of $T$ coming from diametrically opposite endpoints both pass over or below at the first encountered crossing. That remark enables us to distinguish two types of alternating connected tangle diagrams which we call \textit{type 1} tangles and \textit{type 2} tangles as shown in the Fig. \ref{figfig7}. 

\begin{figure}[H]
\centering
\includegraphics[width=0.4\linewidth]{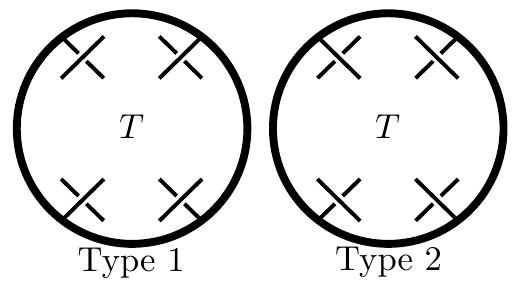}   
\caption{Type 1 and Type 2 alternating tangle diagrams.}
\label{figfig7}
\end{figure}

In order to achieve our particular $3$-manifolds, we will use the tangles obtained as follows. Let $T$ be a connected alternating tangle. We call the \textit{alternating encirclement} of $T$ denoted by $\tau(T)$, the tangle $T$ encircled by a trivial closed curve as depicted in Fig.\ref{ae} such that the resulting tangle is alternating. Note that the notion of alternating encircled tangles first appeared in \cite{thistlethwaite}. If $T$ is of type $2$, then $\tau(T)$ is a connected alternating tangle of type 1. In what follows, we will assume that $T$ is of type $2$.

\begin{figure}[H]
\centering
\includegraphics[width=0.3\linewidth]{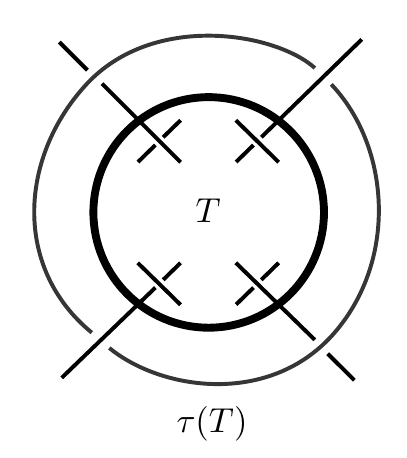}   
\caption{The alternating encirclement of the tangle $T$.}
\label{ae}
\end{figure}

\subsection{Rational tangles}

A \textit{rational tangle} $t$ is a tangle in $B^3$ such that the pair $(B^3, t)$ is homeomorphic to $(B^2 \times [0,1] , \left\lbrace x,y \right\rbrace \times [0,1]) $, where $x$ and $y$ are points in the interior of $B^2$. The elementary rational tangle diagrams $0$, $\pm1$, $\infty$ are shown in Fig. \ref{figfig8}.
\begin{figure}[H]
\centering
\includegraphics[width=0.25\linewidth]{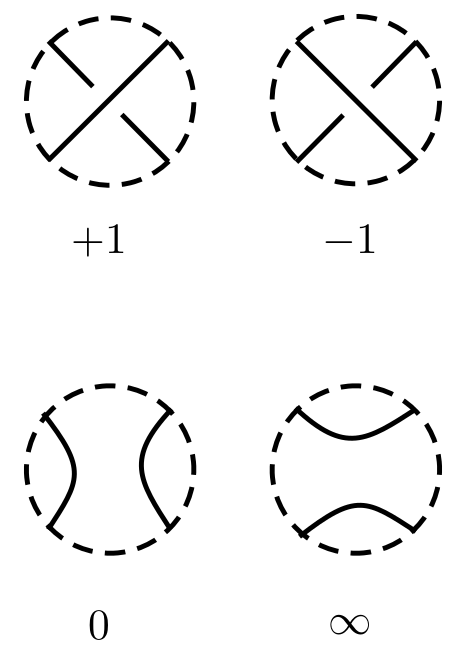}  
\caption{Elementary rational tangles.}
\label{figfig8}
\end{figure}
The sum of $n$ copies of the tangle diagram $1$ or of $n$ copies of the tangle $-1$ are respectively the \textit{integer tangle diagrams} denoted also by $n$ and $-n$.
If $t$ is a rational tangle diagram then $\dfrac{1}{t}_c$ and $\dfrac{1}{t}_{cc}$ are equivalent and both represent the \textit{inversion} of $t$ denoted by $\dfrac{1}{t}$.

 Let $t$ be a rational tangle diagram and $p,q \in \mathbb{Z}$, we have the following equivalences:
$$  p+t+q = t+p+q \text{  ,  } \frac{1}{p} * t * \frac{1}{q} = t * \frac{1}{p+q}.$$
$$ t * \frac{1}{p} = \frac{1}{p+\frac{1}{t}} \text{  ,  } \frac{1}{p} * t = \frac{1}{\frac{1}{t}+p}. $$

Using the above notations and equivalences one can naturally associate to any continued fraction
$$\displaystyle a_1 + \frac{1}{ a_2  + \frac{1}{  \ddots +\frac{1}{ a_{n-1}  + \frac{1}{ a_n } }}},\,\ a_i\in\mathbb{Z},$$ a tangle diagram as shown in Fig. \ref{figfig9} denoted by $\left[ a_1,...,a_{n} \right]$. 

\begin{figure}[H]
\centering
\includegraphics[width=0.8\linewidth]{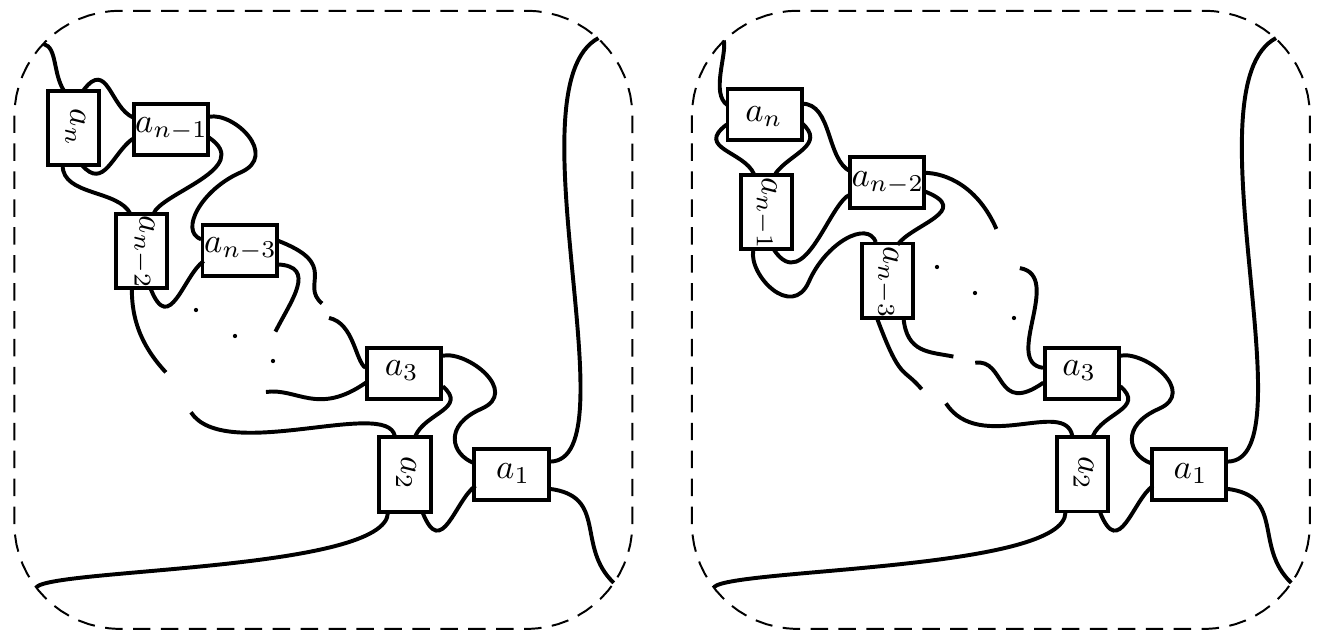}  
\caption{The standard rational tangle diagram $\left[ a_1,...,a_{n} \right]$ according to $n$ is even (left) or odd (right).}
\label{figfig9}
\end{figure}

Conversely, it is known that for any rational tangle $t$, there exists an integer $n \geq 1$ and integers $ a_1 \in \mathbb{Z}$, $a_2, ... , a_n \in \mathbb{Z}\setminus\lbrace 0 \rbrace,$ all of the same sign, such that  $t= \left[ a_1,...,a_{n} \right]$. Then $t$ corresponds to a continued fraction and then to a rational number called the fraction of the tangle.

J. H. Conway showed in \cite{conway} that two rational tangles are equivalent if and only if they have the same fraction. Then any rational tangle $t$ can be represented by a continued fraction $\left[ a_1  , ... ,  a_n \right] = \frac{a}{b}$ where $a$ and $b$ are two coprime integers. 

The \textit{standard diagram} of a rational tangle $t$ will be the connected alternating diagram naturally associated to the continued fraction of $t$ described above. In what follows a rational tangle diagram will mean the standard one.\\

An \textit{algebraic tangle} is a tangle obtained from rational tangles by a sequence of $+$ and $*$ operations.

\subsection{Montesinos links}
Let $t_i \neq 0 , \pm 1$, for $i \in [\![1,n]\!]$, be rational numbers, and let $e$ be an integer. A \textit{Montesinos link} is defined as $ M(e;t_1,...,t_n) := N(e+ \frac{1}{t_1} + ... + \frac{1}{t_n})$. Those links were introduced by Montesinos in \cite{montesinos2}. 

Let $t = \frac{\alpha}{\beta} $  be a rational number with $\beta > 0$. The \textit{floor} of $t$ is $ \lfloor t \rfloor = max \left\lbrace x \in \mathbb{Z} / x \leq t  \right\rbrace ,$ and the \textit{fractional part} of $t$ is $ \left\lbrace t \right\rbrace = t - \lfloor t \rfloor < 1.$ For $t \neq 1$, define $\hat{t} = \frac{1}{ \left\lbrace \frac{1}{t} \right\rbrace } > 1.$ We also put 
$\left( \frac{\alpha}{\beta} \right)^f = \begin{cases}
\frac{\alpha}{\beta - \alpha} &\text{ if } \frac{\alpha}{\beta} > 0 \\
\frac{\alpha}{\beta + \alpha} &\text{ if } \frac{\alpha}{\beta} < 0
\end{cases}$

Let $L$ be the Montesinos link $M(e;t_1,...,t_n)$. We define $ \varepsilon(L) = e + \displaystyle \sum_{i=1}^n \lfloor \frac{1}{t_i} \rfloor$. The link $L$ is isotopic to $M( \varepsilon(L) ; \hat{t_1}, ... , \hat{t_n})$ (Proposition 3.2 , \cite{champanerkar2}). The link $M( \varepsilon(L) ; \hat{t_1}, ... , \hat{t_n})$ is called the \textit{reduced form} of the Montesinos link $L = M(e;t_1,...,t_n)$.

The double branched covering of the $3$-sphere branched over a Montesinos link is a Seifert fibered space as shown in \cite{montesinos2}.

\subsection{Quasi-alternating links}
A link diagram is \textit{alternating} if the over or under nature of the crossings alternates along every link-component in the diagram: the crossings go ``...over, under, over, under,...'' when considered from any starting point. A link is said to be \textit{alternating}  if it possesses such a diagram.

The set of quasi-alternating links appeared in the context of link homology as a natural generalization of alternating links. They were defined in \cite{ozsvath2} by Ozsv{\'a}th and Szab{\'o}. In \cite{manolescu}, Manolescu and Ozsv{\'a}th showed that quasi-alternating links are homologically thin for both Khovanov homology and knot Floer homology as alternating links with which they share many properties. On the other hand, it was shown in \cite{ozsvath2} that every non-split alternating link is quasi-alternating and that the double branched covering of any quasi-alternating link is an $L$-space. Recall that a link $L$ is \textit{non-split} if there is no $2$-sphere in the complement of $L$ in $S^3$ that separates some components of $L$ from the others, and that a link diagram $D$ is \textit{non-split} if there exists no simple closed curve in the plane that separates some components of $D$ from the others. \\
If $D$ is a link diagram, we denote by $\mathcal{L}(D)$ the link for which $D$ is a projection. Quasi-alternating links are defined recursively as follows:
\begin{mydef}
 The set $\mathcal{Q}$ of \textbf{quasi-alternating links} is the smallest set of links satisfying the following properties:
\begin{enumerate}
\item The unknot belongs to $\mathcal{Q}$,
\item If $L$ is a link with a diagram $D$ containing a crossing $c$ such that 
\begin{enumerate}
\item for both smoothings of the diagram $D$ at the crossing $c$ denoted by $D^c_0$ and $D^c_\infty$ as in figure \ref{fig.1}), the links $\mathcal{L}(D^{c}_{0})$ and $\mathcal{L}(D^{c}_{\infty})$ are in $\mathcal{Q}$ and,
\item $\det(L)=\det(\mathcal{L}(D^{c}_{0}))+\det(\mathcal{L}(D^{c}_{\infty})).$
\end{enumerate}
Then $L$ is in $\mathcal{Q}$. In this case we will say that $c$ is a
\textit{quasi-alternating crossing} of $D$ and that $D$ is quasi-alternating at $c$.
\end{enumerate}

\end{mydef}
\begin{figure}[H]
\centering
\includegraphics[width=0.4\linewidth]{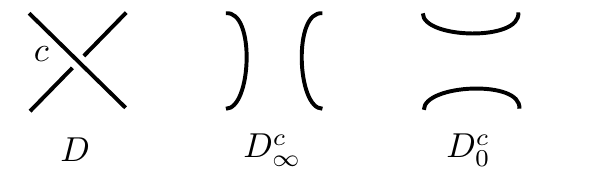}
\caption{The link diagram $D$ and its smoothings $D^{c}_{0}$ and $D^{c}_{\infty}$ at the crossing $c$.}
\label{fig.1}
\end{figure}

\begin{rmk}
A non-split alternating link diagram is quasi-alternating at each non-nugatory crossing by Lemma 3.2 in \cite{ozsvath2}. So, we can compute the determinant of a non-split alternating link by performing successive smoothings at the non-nugatory crossings and then by adding the determinants of the links produced at each step until we get the trivial knot. We will use this remark in our computations.
\label{rmk2}
\end{rmk}

\subsection{Dehn fillings}
Let $\alpha$ and $\beta$ be unoriented simple closed curves on a torus $T^2$. Then $\alpha$ and $\beta$ are isotopic if and only if $\left[ \alpha \right] = \pm \left[ \beta \right] \in H_1(T^2)$. A \textit{slope} on $T^2$ is an isotopy class of unoriented essential simple closed curves on $T^2$. Recall that $T^2$ bounds a solid torus $V$. A \textit{meridian} $m$ of $T^2$ is an unoriented essential simple closed curve on $T^2$ that bounds a disk in $V$. Note that the meridian is unique up to isotopy. A \textit{longitude} $l$ of $T^2$ is an unoriented essential simple closed curve on $T^2$ that meets the meridian tranversally at a single point. The pair $(m,l)$ provides a basis of $H_1(T^2) \cong \mathbb{Z} \oplus \mathbb{Z}$. More precisely, if $\alpha$ is a slope on $T^2$, then $\left[ \alpha \right] = \pm (a \left[ m \right] + b \left[ l \right]) \in H_1(T^2)$ for some coprime integers $a$ and $b$. The correspondence $\alpha \leftrightarrow \frac{a}{b} $ is one to one. This establishes an identification of the slopes on $T^2$ with the set $\mathbb{Q} \cup \left\lbrace\frac{1}{0} \right\rbrace$. 

Let $K$ be a knot in $S^3$. Let $V$ be a tubular neighborhood of $K$. Let $m$ be the meridian of $\partial V$. We choose a longitude $l$ of $\partial V$ to be the trace of a Seifert surface of $K$ on $\partial V$. So $l$ is null-homologous in the exterior of $K$. Recall that the choice of such longitude is unique up to ambient isotopy.\\
Recall that the \textit{surgery} on $S^3$ along $K$ with slope $\dfrac{a}{b}$, $a,b\in\Z$, is the operation which consists in removing the interior of $V$ and then gluing a solid torus $S^1 \times B^2$ to $S^3 \setminus \accentset{\circ}{V}$ such that the meridian $(* \times \partial B^2)$ of $S^1 \times B^2$ is identified with the slope $ \left[ \alpha \right] = a \left[ m \right] + b \left[ l \right]$. A surgery with an integer slope is said to be an \textit{integer} surgery.

Let $M$ be a $3$-manifold with torus boundary $T_0$ and $\alpha$ be a slope on $T_0$. Define the $\alpha$-\textit{Dehn filling} of $M$ denoted by $M(\alpha)$, to be the manifold obtained by gluing a solid torus $V$ to $M$ so that the boundary of the meridional disk in $V$ is glued to $\alpha$:
$$ M(\alpha) = M \cup_{T_0=\partial V} V.$$
More details about Dehn fillings and surgery can be found in Gordon's paper \cite{Gordon} and in the book of Prasolov and Sossinsky \cite{prasolov}.

\subsection{The Montesinos trick}
Let $T=(B,A)$ be a tangle and $\Sigma_2(B,A)$ the double branched covering of $B$ along $A$. Notice that $\Sigma_2(B,A)$ is a $3$-manifold with torus boundary. Let $(\gamma_{\infty},\gamma_0)$ be a pair of embedded arcs in $\partial B$ with endpoints on $\partial A$ as shown in Fig. \ref{pfig}. The pair $(\gamma_{\infty},\gamma_0)$ lifts to a (unoriented) basis $(\tilde{\gamma}_{\infty},\tilde{\gamma}_0)$ for $H_1(\partial \Sigma_2(B,A), \mathbb{Z})$. By fixing an orientation so that $\tilde{\gamma}_{\infty}.\tilde{\gamma}_0 = 1$, we obtain a basis to do Dehn fillings of $\Sigma_2(B,A)$ called the \textit{standard} basis. Montesinos observed in \cite{Montesinos} that a Dehn filling of $\Sigma_2(B,A)$ may be viewed as a double branched covering of $S^3$ along a specified link. More precisely, for a given slope $\alpha = p\tilde{\gamma}_{\infty} + q\tilde{\gamma}_0$ in $\partial \Sigma_2(B,A)$, Montesinos showed that $\Sigma_2(B,A)(\alpha) \cong \Sigma_2(S^3,N(-\frac{p}{q}+T))$, where $\Sigma_2(S^3,N(-\frac{p}{q}+T))$ is the double branched covering of $S^3$ along $N(-\frac{p}{q}+T)$. This observation is referred to as the \textit{Montesinos trick}. For the seek of simplicity, we denote by $\Sigma_2(B,A)(\frac{p}{q})$ the manifold $\Sigma_2(B,A)(\alpha)$ when $\alpha$ is the slope corresponding to the fraction $\frac{p}{q}$ with respect to the standard basis for Dehn fillings of $\Sigma_2(B,A)$. 

\begin{figure}[H]
\centering
\includegraphics[width=0.25\linewidth]{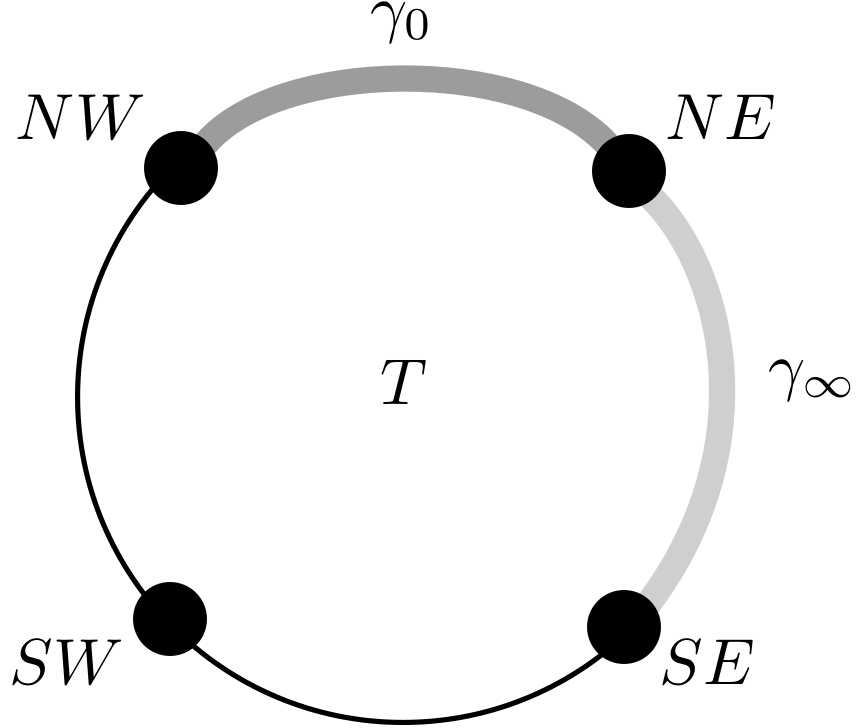}
\caption{The pair of curves $(\gamma_{\infty},\gamma_0)$.}
\label{pfig}
\end{figure}

\noindent\textbf{Band surgery.} Let $L$ be a link in $S^3$ and $b: I \times I \rightarrow S^3$ an embedding such that $L \cap b(I \times I)=b(\partial I \times I)$, where $I$ is the unit interval. Let $L'$ denote the link obtained by replacing $b(\partial I \times I)$ in $L$ by $b(I \times \partial I)$. Then we say the link $L'$ results from \textit{band surgery} along $L$.\\
If $L$ is a link obtained by a band surgery along the trivial knot $U$, then we can describe a surgery on $S^3$ that provides the double branched covering of the link $L$ as follows.\\ 
Let $\alpha$ denote the simple arc $b(I \times \left\lbrace \frac{1}{2} \right\rbrace)$. Note that $\alpha$ is embedded in $S^3$ with endpoints on $U$. Let $B$ be a regular neighborhood of $\alpha$ in $S^3$. Without loss of generality, one can assume that $ b(I \times I) \subset B$ and $\partial B \cap b(I \times I) = b( \left\lbrace (0,0),(1,0),(0,1),(1,1) \right\rbrace )$, meaning that the band $b( I \times I)$ is entirely contained in $B$ and meets the boundary of $B$ only at its four corners (see Fig. \ref{ballband}). Note that the pair $(S^3 \setminus \accentset{\circ}{B}, (S^3 \setminus \accentset{\circ}{B})\cap U)$ is a tangle and the pairs $(B, B \cap b(I \times \partial I))$ and $(B,B \cap U)$ are rational tangles. By using the Montesinos trick, we have that $\Sigma_2(S^3 \setminus \accentset{\circ}{B}, (S^3\setminus \accentset{\circ}{B}) \cap U)(\frac{p}{q})$ is homeomorphic to $\Sigma_2(S^3,L)$, where $-\frac{p}{q}$ is the fraction of the rational tangle $(B, B \cap b(I \times \partial I))$. This is equivalent to say that the manifold $\Sigma_2(S^3,L)$ is obtained by a surgery on $\Sigma_2(S^3,U) \cong S^3$ along a lift of the arc $\alpha$ (which is a knot in $S^3$). 

\begin{figure}[H]
\centering
\includegraphics[width=0.25\linewidth]{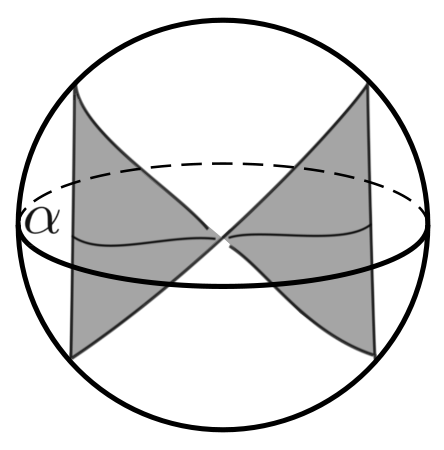}
\caption{An embedded band in the $3$-ball $B$.}
\label{ballband}
\end{figure}

\subsection{Coarse Brunner's presentation}
In this section we recall the construction of the coarse Brunner's presentation and the non-left-orderability criterion based on that presentation. For more details we refer to Paragraph 3 in \cite{ito}.
\subsubsection{Brunner's presentation}
Let $D$ be a diagram of a non-split link $L$. We consider a checkerboard coloring of $D$  with the convention that the unbounded region is not colored. Then we get a surface, possibly non-orientable, whose boundary is the link $L$. We call the obtained surface a \textit{checkerboard surface}.\\
\noindent\textit{\textbf{Decomposition graph.}} The checkerboard surface is decomposed as a union of disks and twisted bands in an obvious way. Among these decompositions we choose the maximal one, that is the disc-twisted band decomposition having the minimal number of twisted bands. The obtained decomposition is called a \textit{disk-band decomposition} of the checkerboard surface. To such a decomposition we associate a labeled planar graph $G_D$, called the \textit{decomposition graph}, as follows: we assign a vertex to each disk, and to each twisted band that connects two disks we assign a labeled edge connecting the corresponding vertices. The labeling of edges is done according to Fig.\ref{figbis3}. A component of $\R^2\setminus G_D$ is called a \textit{region} of the diagram $D$. A region of $D$ is identified with a white colored region of the diagram $D$.

\begin{figure}[H]
\centering
\includegraphics[width=0.7\linewidth]{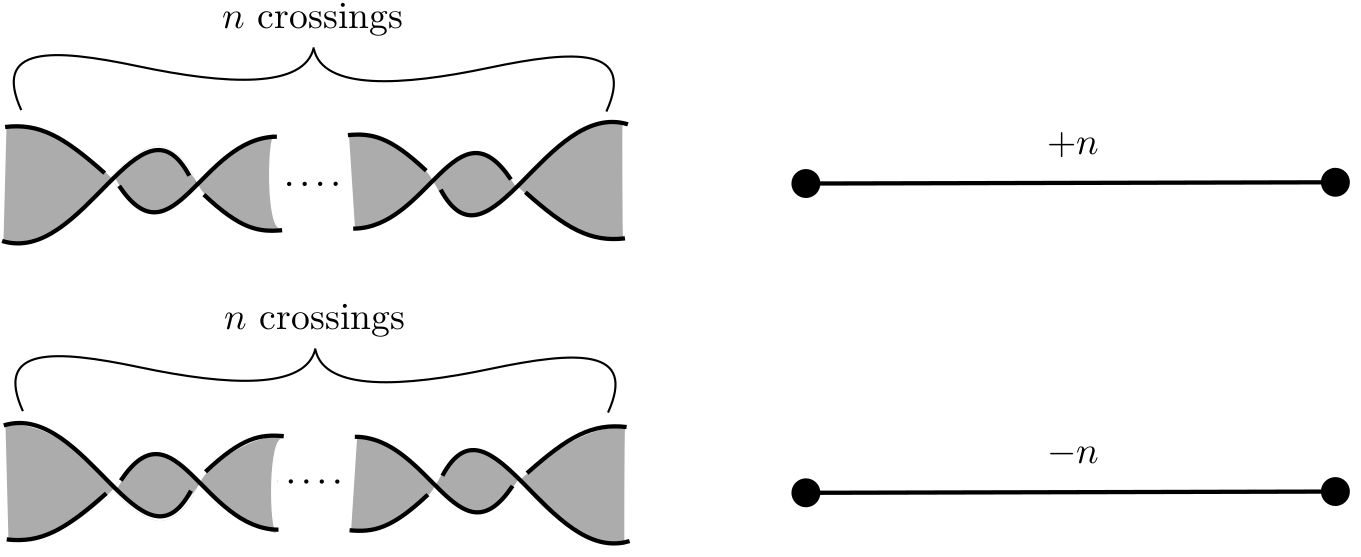}
\caption{Assignment of labeled edges to twisted bands.}
\label{figbis3}
\end{figure}

\noindent\textit{\textbf{Connectivity graph.}} From the decomposition graph, we construct an oriented planar graph called the \textit{connectivity graph}, denoted by $\tilde{G}_D$,  as follows: The vertices of $\tilde{G}_D$ are the same as those of $G_D$, while an edge is obtained by connecting two vertices (discs) by a single arc corresponding to one twisted band connecting them. Explicitly, this amounts to connect two vertices by choosing one of the edges connecting the two vertices in $G_D$.Then we orient the edges of $\tilde{G}_D$ according to the rule shown in Fig.\ref{figbis4}. We endow the decomposition graph $G_D$ with the orientation induced by that of the connectivity graph $\tilde{G}_D$ in the obvious way.

\begin{figure}[H]
\centering
\includegraphics[width=0.4\linewidth]{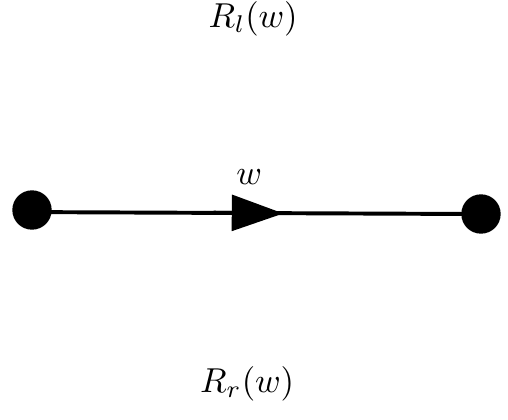}
\caption{The left-adjacent and right-adjacent regions of an oriented edge.}
\label{figbis4}
\end{figure}

For an edge $w$ of $G_D$, we distinguish two regions of $D$, the \textit{left-adjacent region} $R_l(w)$ and the \textit{right-adjacent region} $R_r(w)$, as shown in Fig.\ref{figbis4}. 

By using these notions, Brunner's presentation of $\pi_1(\Sigma_2(S^3,L))$ is given as follows \cite{brunner}:
\begin{theo} Let $L$ be a non-split link in $S^3$ represented by a diagram $D$, and $G_D$ and $\tilde{G}_D$ be the decomposition and the connectivity graphs. Then the fundamental group of $\Sigma_2(S^3,L)$ has the following presentation called Brunner's presentation.\\
\textbf{[Generators]} 

\textbf{Edge generators:} The edges $\left\lbrace W_i \right\rbrace$ of the connectivity graph $\tilde{G}_D$. 

\textbf{Region generators:} The regions $\left\lbrace R_j \right\rbrace$ of the link diagram $D$.\\
\textbf{[Relations]} 

\textbf{Local edge relations:} $W=(R_l(w)^{-1} R_r(w))^a$, where $w$ is an edge of the decomposition graph $G_D$ with label $a$, and $W$ is an edge generator corresponding to $w$. 

\textbf{Global cycle relations:} $W_n^{\pm1}...W_1^{\pm1}=1$, if the edge-path $W_n^{\pm1}...W_1^{\pm1}$ represents a loop in $\mathbb{R}^2$. 

\textbf{Vanishing relation:} $R_0=1$, where $R_0$ is the unbounded region generator.
\end{theo}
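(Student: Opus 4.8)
The plan is to realize $M=\Sigma_2(S^3,L)$ geometrically from the checkerboard surface and then read off the presentation by Seifert--van Kampen, attaching each family of generators and relations to a specific piece of a handle decomposition built from the disk-band decomposition. The guiding principle is that the double branched cover is classified by the homomorphism $\pi_1(S^3\setminus L)\to\Z/2\Z$ sending every meridian to the nontrivial element, a map that is well defined regardless of the orientability of the checkerboard surface, so the non-orientable bands cause no trouble at this stage.

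First I would fix the black checkerboard surface $F$ spanning $L=\partial F$ and use its disk-band decomposition: thicken each disk to a $0$-handle and each twisted band to a $1$-handle, matching exactly the vertices and edges of $G_D$. A regular neighborhood $N(F)$ is then an $I$-bundle over $F$, twisted precisely along the non-orientable bands, and its complement in $S^3$ is a handlebody $H$ whose cocores lie over the white regions of $D$. I would build $M$ by assembling the preimages $p^{-1}(N(D_i))$, $p^{-1}(N(b_j))$ and $p^{-1}(H)$ under the covering map $p$, so that van Kampen can be applied piece by piece. The region generators $R_j$ arise as lifts of loops dual to the white regions (each such loop pierces the projection plane once in a white region, hence has trivial linking with $L$ and lifts to a genuine loop in $M$); placing the basepoint over the unbounded region $R_0$ forces $R_0=1$, which is the vanishing relation.

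The core computation, and the main obstacle, is the local analysis over a single band. Over a twisted band the two sheets of the cover are interchanged, and the loop encircling the band lifts to the edge generator $W$. I must show that passing one half-twist of the band multiplies the local monodromy by $R_l(w)^{-1}R_r(w)$, the product of the two adjacent white-region generators on the left and right of the oriented edge, so that a band carrying label $a$ yields
\[
W=\bigl(R_l(w)^{-1}R_r(w)\bigr)^{a},
\]
which is the local edge relation. Getting the exponent and the orientation conventions exactly right, namely the half-twist count recorded by the labels of Fig.~\ref{figbis3} and the left/right convention of Fig.~\ref{figbis4}, is where the argument is most delicate; it is the double-branched-cover analogue of the fact that the integer tangle with $a$ crossings lifts to a solid torus whose core winds $a$ times.

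Finally I would assemble the pieces. The disks and bands are glued along the handlebody $H$, and the only remaining relations record how the band-loops close up in the plane: an edge-path $W_n^{\pm1}\cdots W_1^{\pm1}$ that bounds in $\R^2$ lifts to a null-homotopic loop in $M$, producing the global cycle relations, and one checks these are exactly the relations carried by the planar connectivity graph $\tilde{G}_D$. To confirm that no relation is missing I would run an Euler-characteristic bookkeeping over the handle decomposition, or alternatively derive the presentation from the classical Goeritz presentation of $\pi_1(\Sigma_2(S^3,L))$ by Tietze transformations, eliminating the crossing relations in favour of the band generators $W_i$. The non-split hypothesis on $L$ guarantees that $F$ is connected, so that $H$ is a single handlebody and the single normalization $R_0=1$ suffices.
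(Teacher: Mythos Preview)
The paper does not prove this theorem at all: it is quoted verbatim from Brunner's paper \cite{brunner}, introduced by the sentence ``Brunner's presentation of $\pi_1(\Sigma_2(S^3,L))$ is given as follows \cite{brunner}'', and immediately followed by conventions and an example, with no argument. So there is nothing in the paper to compare your proposal against; any correct proof you give already exceeds what the paper supplies.

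As for the sketch itself, the overall architecture---build $\Sigma_2(S^3,L)$ from the $I$-bundle over the checkerboard surface together with the complementary handlebody, then run Seifert--van~Kampen over the disk-band pieces---is the right strategy and is essentially Brunner's own. Two places deserve more care before this becomes a proof. First, your description of the region generators as ``lifts of loops dual to the white regions'' is too quick: the loop you describe (piercing the projection plane once in a white region) is null-homotopic in $S^3\setminus L$, so its lift is trivial and cannot serve as a generator. The region generators are more naturally seen as the cores of the $1$-handles of the complementary handlebody $H$ (one per bounded white region) after lifting, or equivalently via Tietze transformations from a Wirtinger/Goeritz-type presentation; you allude to this alternative at the end, and that is the route that actually works. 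Second, in the local band computation you correctly flag the sign and exponent bookkeeping as ``delicate'', but you do not carry it out; matching the label convention of the decomposition graph with the left/right convention on the connectivity graph so that $W=(R_l(w)^{-1}R_r(w))^a$ comes out with the stated signs is the entire content of the local step and cannot be left as an assertion.
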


Here, the edge $W^{-1}$ is the edge $W$ with the opposite orientation. Also, we use the convention that $W_2W_1$ is representing the edge-path that travels along $W_1$ first, then along $W_2$.
\begin{empl}
Let $D$ be the link diagram on the left in Fig. \ref{bpe}. We construct the decomposition and connectivity graphs as shown in the same figure. The Brunner's presentation of the group $\pi_1(\Sigma_2(S^3,\mathcal{L}(D)))$ is written as follows.
\[\Biggl\langle 
       \begin{array}{l|cl}
                        & W_1 =R_1^3,W_2=R_2^{-1}R_1,W_3=R_4^{-1}R_1,W_4=R_2,W_5=R_4, \\
                        &  W_6=(R_3^{-1}R_2)^{2}=(R_4^{-1}R_2)^{2} \\
           \left\lbrace W_i \right\rbrace_{1 \leq i \leq 6} , \left\lbrace R_j \right\rbrace_{1 \leq j \leq 4} &  & \\
                        & W_1^{-1}W_2W_3=W_2^{-1}W_4W_6=W_5^{-1}W_6W_3=1  &                                             
        \end{array}
     \Biggr\rangle\]
\end{empl}

\begin{figure}[H]
\centering
\includegraphics[width=0.8\linewidth]{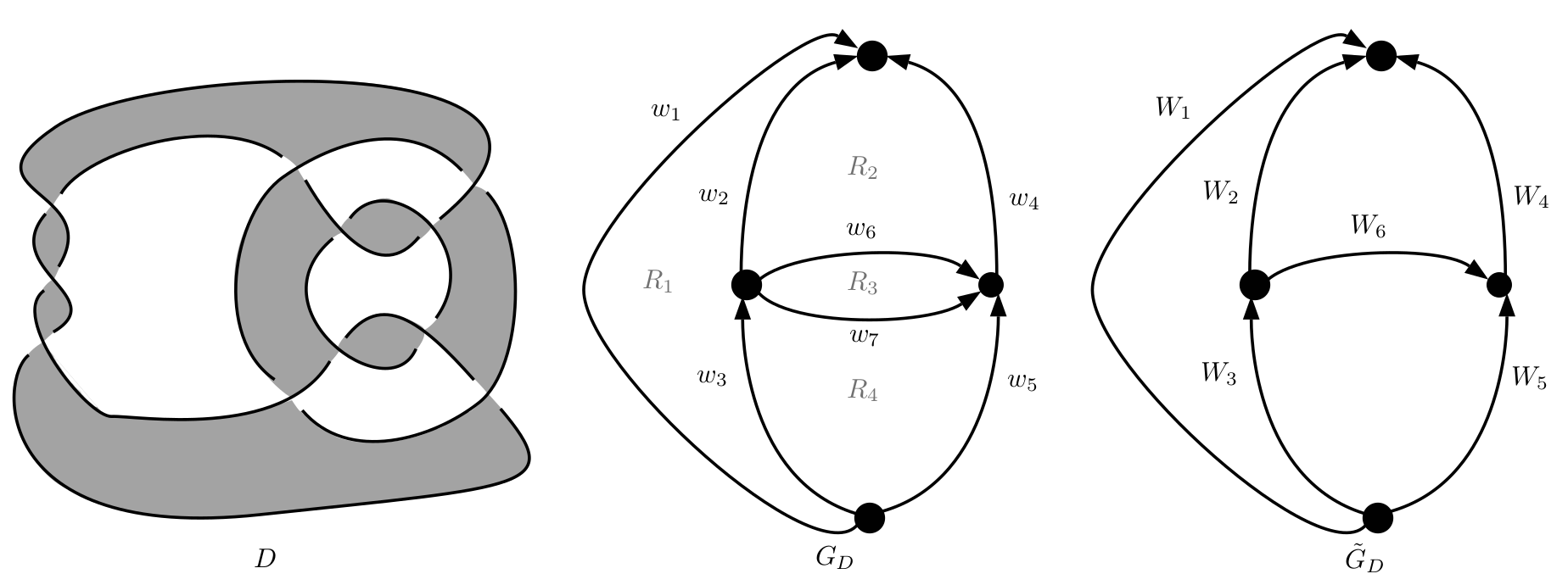}
\caption{A checkerboard surface of a link diagram $D$ (left), the associated decomposition graph $G_D$ (middle), and the associated connectivity graph $\tilde{G}_D$ (right).}
\label{bpe}
\end{figure}
\subsubsection{Universal ranges}
Let $G$ be a group, $h \in G$, and $<_G$ be a left-order on $G$. For any rational numbers $P=\frac{a}{b},Q=\frac{c}{d}$ such that $\frac{a}{b} \leq \frac{c}{d}$ and $b,d > 0$, let $ \llbracket \frac{a}{b} ~,~ \frac{c}{d} \rrbracket_{h,<_G} $ be a subset of $G$ defined by

\[
  \llbracket P ~,~ Q \rrbracket_{h,<_G} =
  \begin{cases}
                                   \left\lbrace g \in G | gh=hg, h^{a} \leq_G g^{b}, g^d \leq_G h^c \right\rbrace & \text{if } h \geq_G 1 
                                    \\
                                   \left\lbrace g \in G | gh=hg, g^b \leq_G h^a, h^c \leq_G g^d \right\rbrace & \text{if } h \leq_G 1 
  \end{cases}
\]

Note that under the assumption that $b,d > 0$, and for any $m,n \in \mathbb{Z} \setminus \left\lbrace 0 \right\rbrace$, we have $$ \llbracket \frac{a}{b} ~,~ \frac{c}{d} \rrbracket_{h,<_G} =  \llbracket \frac{ma}{mb} ~,~ \frac{nc}{nd} \rrbracket_{h,<_G}.$$ 
So the set $\llbracket P ~,~ Q \rrbracket_{h,<_G}$ does not depend on the choice of the representatives of the rational numbers $P$ and $Q$. 

We define $$ \llbracket P ~,~ + \infty \rrbracket_{h,<_G} =  \bigcup_{Q > P} \llbracket P ~,~ Q \rrbracket_{h,<_G},$$
$$ \llbracket - \infty ~,~ Q \rrbracket_{h,<_G} =  \bigcup_{P < Q} \llbracket P ~,~ Q \rrbracket_{h,<_G},$$
$$ \llbracket - \infty ~,~ + \infty \rrbracket_{h,<_G} =  \bigcup_{P \in \mathbb{Z}_{>0}} \llbracket -P ~,~ P \rrbracket_{h,<_G}.$$

For $P \in \mathbb{Q} \cup \left\lbrace -\infty \right\rbrace$, $Q \in \mathbb{Q} \cup \left\lbrace +\infty \right\rbrace$, $P \leq Q$, define
$$\llbracket P ~,~ Q \rrbracket_{h} = \bigcap\limits_{<_G \in LO_G} \llbracket P ~,~ Q \rrbracket_{h,<_G},$$ where $LO_G$ is the set of all left-orders on $G$. If $g \in \llbracket P ~,~ Q \rrbracket_{h}$, then we say that $\llbracket P ~,~ Q \rrbracket$ is an \textit{$h$-universal range} of $g$.

\subsubsection{A left-orderability criterion}
Any link diagram $D$ can be decomposed into embedded algebraic tangles attached together with a set of strands. Such a decomposition of $D$ induces a decomposition of its checkerboard surface into a set of disks and subsurfaces corresponding to tangles. The last decomposition is called a \textit{tangle-strand decomposition} of $D$.\\
Let $S$ be a subsurface of the checkerboard surface of a link diagram $D$. Let $\Delta$ be the projection disk of the corresponding tangle. We consider the two arcs which constitute the intersection of $\partial\Delta$ with $S$. To distinguish the isotopy class of the tangle we will use, we agree that the endpoints of these two arcs will be labelled in the following way: one of them connects $NW$ to $SW$ while the other connects $NE$ to $SE$. We denote by $t$ the tangle corresponding to this labelling. Then the subsurface $S$ is denoted by $Q(t)$ and is called the \textit{tangle part} corresponding to $t$ (See elementary cases in Fig.\ref{figfig4}).

\begin{figure}[H]
\centering
\includegraphics[width=1\linewidth]{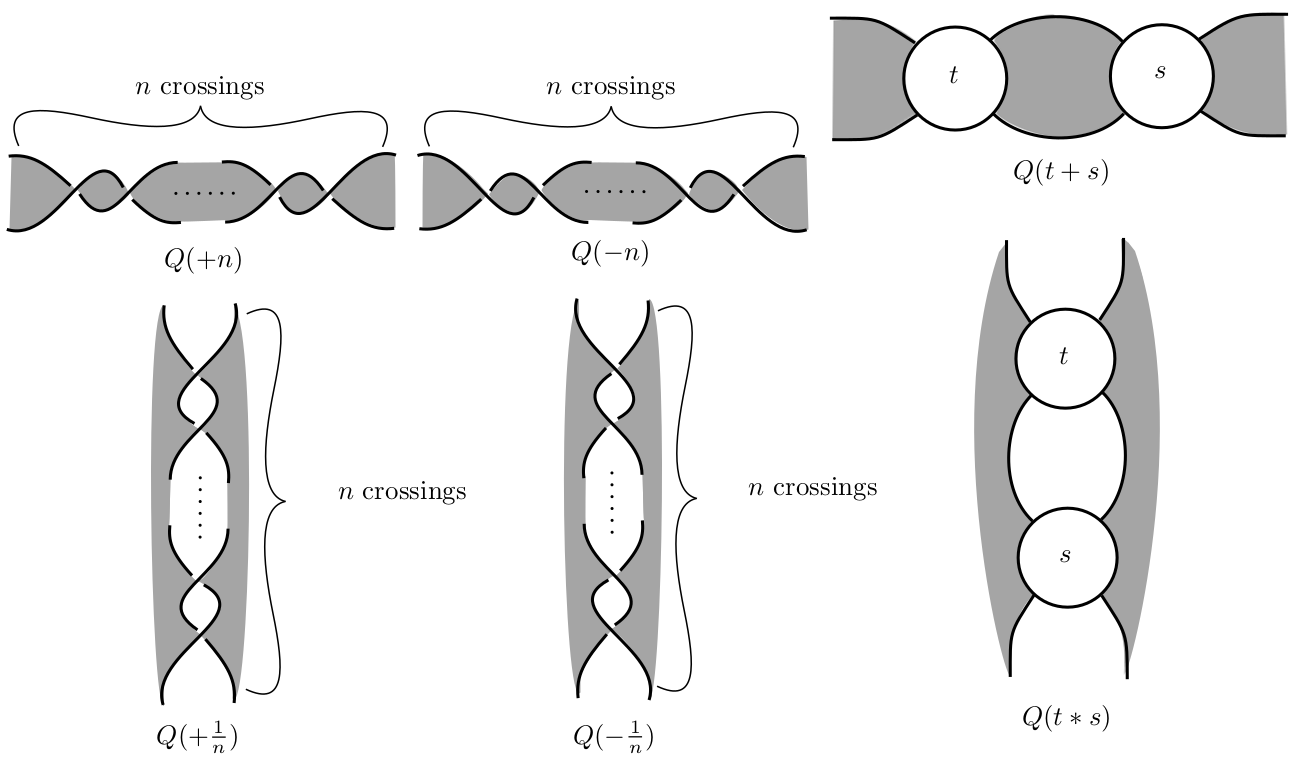}
\caption{Labeling of the tangle parts.}
\label{figfig4}
\end{figure}

\noindent\textbf{\textit{Coarse decomposition graph}.} To a tangle-strand decomposition of $D$, we associate an oriented planar graph $\Gamma_D$, called the \textit{coarse decomposition graph} of $D$ in the following way. The vertex of $\Gamma_D$ is a disk part of the tangle-strand decomposition. To each tangle part, we assign an edge connecting the vertices that correspond to the disks connected by the considered tangle part. We orient that edge according to the rule depicted in Fig.\ref{figbis2}.

\begin{figure}[H]
\centering
\includegraphics[width=0.7\linewidth]{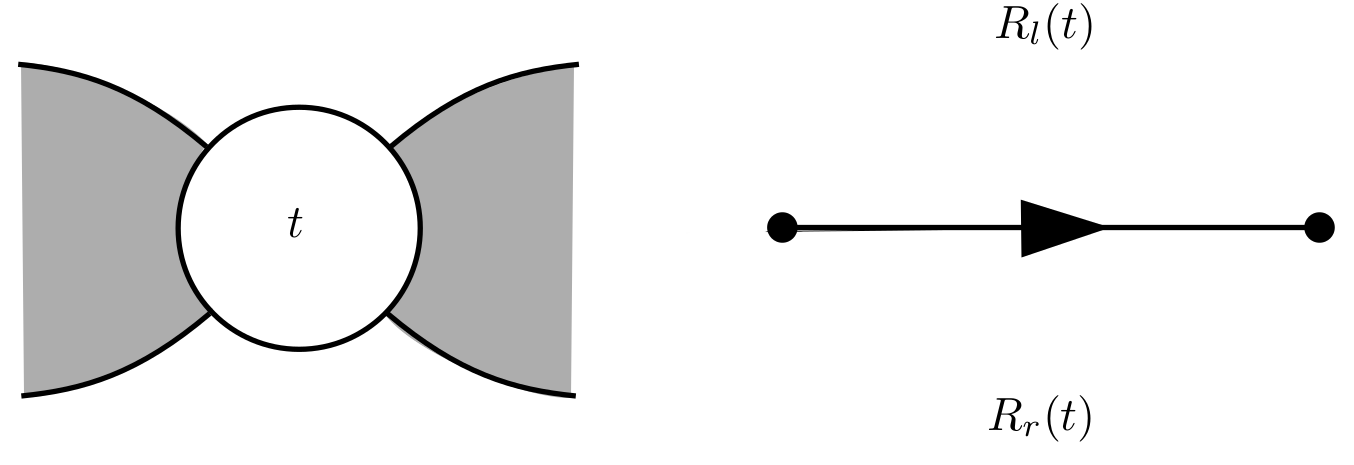}
\caption{Assignment of oriented edges to tangle parts.}
\label{figbis2}
\end{figure}

Let $Q(t)$ be a subsurface of the checkerboard surface as that introduced above. Denote by $\tilde{G}_t$ the subgraph of $\Gamma_D$ derived from the sub-diagram $t$ of $D$. Let $u$ and $v$ be the vertices corresponding to the disk parts joined by $Q(t)$ in the tangle-strand decomposition. We denote by $W_t \in \pi_1(\Sigma_2(S^3,\mathcal{L}(D)))$, the \textit{tangle element}  which is the uppermost edge-path in $\tilde{G}_t$ connecting the vertices $u$ and $v$. For convenience, the edge of $\Gamma_D$ that corresponds to $Q(t)$ is also denoted by $W_t$. Note that the regions of $\Gamma_D$ are elements of $\pi_1(\Sigma_2(S^3,\mathcal{L}(D)))$. For each edge $W_t$ of $\Gamma_D$, we distinguish two special regions: the left-adjacent region $R_l(t)$, and the right-adjacent region $R_r(t)$ as depicted in Fig.\ref{figbis2}. Ito showed that $W_t$ commutes with $R_l(t)^{-1}R_r(t)$ (Lemma 3.3, \cite{ito}).\\
A \textit{universal range of $t$} is an $(R_l(t)^{-1}R_r(t))$-universal range of $W_t$. A tangle-strand decomposition is said to be \textit{nice} if all tangles have universal range in $\llbracket - \infty ~,~ + \infty \rrbracket$. 
Now we are ready to define the coarse Brunner's presentation\\

Let $D$ be a link diagram representing a non-split link together with a coarse decomposition graph $\Gamma_D$ associated to a nice tangle-strand decomposition of $D$. The coarse Brunner's presentation $\mathcal{CB}$ of $D$ associated to $\Gamma_D$ is a set of generators and relations given as follows: 

\textbf{[Generators]} 

\textbf{Tangle generators:} The tangle elements $\left\lbrace W_i \right\rbrace$ (the edges of the coarse decomposition graph $\Gamma_D$). 

\textbf{Region generators:} The regions $\left\lbrace R_j \right\rbrace$ of the coarse decomposition graph $\Gamma_D$.

\textbf{[Relations]} 

\textbf{Local coarse relations:} $W_t \in \llbracket P_t ~,~ Q_t \rrbracket_{(R_l(t)^{-1}R_r(t))}$, where $\llbracket P_t ~,~ Q_t \rrbracket$ is a universal range of $t$. 

\textbf{Global cycle relations:} $W_n^{\pm1}...W_1^{\pm1}=1$, if the edge-path $W_n^{\pm1}...W_1^{\pm1}$ represents a loop in $\mathbb{R}^2$. 

\textbf{Vanishing relation:} $R_0=1$, where $R_0$ is the unbounded region generator.

Here, the edge $W^{-1}$ is the edge $W$ with the opposite orientation.

In \citep{ito}, Ito observed that when $\pi_1(\Sigma_2(S^3,\mathcal{L}(D)))$ is left-orderable, then if $R_l(t)=R_r(t)=W_t=1$, then all edge and region generators that appear in $\tilde{G}_t$ are trivial. This observation, together with the convention that the trivial group is non-left-orderable allowed Ito to give the following left-orderability criterion. 
\begin{theo}[Theorem 3.11, \cite{ito}]
Let $\mathcal{CB}$ be a coarse Brunner's presentation associated to a nice tangle-strand decomposition of a link diagram $D$. If $\pi_1(\Sigma_2(S^3,\mathcal{L}(D)))$ is left-orderable, then at least one region generator in $\mathcal{CB}$ is non-trivial.
\label{thito}
\end{theo}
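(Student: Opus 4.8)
The plan is to prove the contrapositive: assuming that \emph{every} region generator of $\mathcal{CB}$ is trivial in $G:=\pi_1(\Sigma_2(S^3,\mathcal{L}(D)))$, I would show that $G$ is the trivial group, which is non-left-orderable by convention. So suppose, towards a contradiction, that $G$ is left-orderable and that $R_j=1$ for every region generator $R_j$ of $\mathcal{CB}$.

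The first step is a direct computation with the universal ranges, namely that for $h=1$ and any left-order $<_G$ on $G$ one has $\llbracket -\infty~,~+\infty\rrbracket_{1,<_G}=\{1\}$. Indeed, for $P\in\mathbb{Z}_{>0}$ the set $\llbracket -P~,~P\rrbracket_{1,<_G}$ (taking the branch $h\geq_G 1$, which applies since $1\leq_G 1$) equals $\{g\in G \mid g\cdot 1=1\cdot g,\ 1\leq_G g,\ g\leq_G 1\}=\{1\}$, because $<_G$ is a total order; taking the union over all $P$ leaves this unchanged, and intersecting over $LO_G$, which is non-empty since $G$ is left-orderable, gives $\llbracket -\infty~,~+\infty\rrbracket_1=\{1\}$. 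Now I would feed this into the local coarse relations. Since the tangle--strand decomposition is nice, each tangle $t$ has universal range $\llbracket -\infty~,~+\infty\rrbracket$, so $\mathcal{CB}$ contains the relation $W_t\in\llbracket -\infty~,~+\infty\rrbracket_{(R_l(t)^{-1}R_r(t))}$. But $R_l(t)$ and $R_r(t)$ are region generators of $\mathcal{CB}$, since they are regions of $\Gamma_D$, and hence trivial by hypothesis; therefore $R_l(t)^{-1}R_r(t)=1$ and $W_t\in\llbracket -\infty~,~+\infty\rrbracket_1=\{1\}$. Thus $R_l(t)=R_r(t)=W_t=1$ for every tangle $t$ of the decomposition.

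At this point I would invoke the observation of Ito recalled just before the statement: since $G$ is left-orderable and $R_l(t)=R_r(t)=W_t=1$, all edge generators and all region generators of Brunner's presentation that appear in $\tilde{G}_t$ are trivial. As every crossing of $D$ lies in some tangle part (the strands carry no crossings), every edge generator of the connectivity graph $\tilde{G}_D$ occurs in some $\tilde{G}_t$, hence is trivial; and every region generator of Brunner's presentation is either a region of $\Gamma_D$, trivial by hypothesis, or occurs in some $\tilde{G}_t$, trivial by the previous sentence. Therefore every generator of Brunner's presentation of $G$ is trivial, so $G$ is the trivial group, contradicting that $G$ is left-orderable. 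This contradiction proves the theorem.

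The only genuinely non-routine ingredient is Ito's propagation observation quoted above, which one is entitled to use here; the rest is bookkeeping with the definitions. The step that most deserves care, and which I would expect to be the main obstacle to making the argument airtight, is verifying that the tangle parts together with the region generators of $\mathcal{CB}$ really account for \emph{all} edge and region generators of the underlying Brunner presentation, so that triviality genuinely propagates to all of $G$ rather than to a proper subgroup; this should follow from the way a tangle--strand decomposition of $D$ is built, but it is the point where one must be precise about which regions of $D$ survive as regions of $\Gamma_D$ and which are absorbed into the subgraphs $\tilde{G}_t$.
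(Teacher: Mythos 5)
This theorem is imported from Ito (Theorem 3.11 of \cite{ito}); the paper does not reprove it, but only recalls the two ingredients of Ito's argument, namely the observation that left-orderability together with $R_l(t)=R_r(t)=W_t=1$ trivializes every edge and region generator appearing in $\tilde{G}_t$, and the convention that the trivial group is non-left-orderable. Your proposal is correct and follows exactly this route: the computation $\llbracket -\infty~,~+\infty \rrbracket_{1,<_G}=\{1\}$ combined with niceness and the vanishing of all region generators forces every tangle element $W_t$ to be trivial, Ito's propagation observation then trivializes the whole Brunner presentation so that $\pi_1$ is trivial, and the convention yields the contradiction, with the only point needing care being the bookkeeping (which you correctly flag) that the regions of $\Gamma_D$ together with the subgraphs $\tilde{G}_t$ exhaust all generators of Brunner's presentation.
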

\section{Main results and some applications}\label{Main_results}
In this section, we state our main theorems and then we give some applications.
\subsection{Main Theorems.}
\begin{theo}
If $T$ is a connected alternating tangle and if $(B,\tau(T))$ is its alternating encirclement, then for any slope $\alpha = p \tilde{\gamma}_{\infty} + (p+q) \tilde{\gamma}_0$ on the torus $\partial(\Sigma_2(B,\tau(T)))$ such that $\frac{p}{q} \leq 1$, the manifolds $\Sigma_2(B,\tau(T))(\alpha)$ and $\Sigma_2(B,\tau(T))(\frac{1}{\alpha})$ have non-left-orderable fundamental groups.
\label{theo1}
\end{theo}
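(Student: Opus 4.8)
The plan is to apply Ito's left-orderability criterion (Theorem \ref{thito}) to a carefully chosen diagram of the branched-set link. By the Montesinos trick, $\Sigma_2(B,\tau(T))(\alpha)$ is the double branched cover of $S^3$ along $N(-\frac{p}{q}+\tau(T))$ for the slope $\alpha = p\tilde\gamma_\infty + q\tilde\gamma_0$; with the parametrization $\alpha = p\tilde\gamma_\infty + (p+q)\tilde\gamma_0$ the relevant rational tangle attached is $[\,-1,\ldots\,]$-type, and the hypothesis $\frac{p}{q}\le 1$ is exactly what makes that tangle an \emph{alternating} (hence quasi-alternating-friendly) integer-or-continued-fraction tangle. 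So the first step is to fix, once and for all, a diagram $D$ for $N(-\frac{p}{q}+\tau(T))$ built from: the alternating diagram of $T$ (of type $2$), the encircling loop, and the standard diagram of the rational tangle coming from $-\frac{p}{q}$, and to check that this diagram is non-split so that $\pi_1$ of its double branched cover has a coarse Brunner presentation at all.

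Next I would set up the tangle-strand decomposition of $D$. The natural choice is to cut $D$ into three pieces: the tangle part $Q(T)$ coming from the inner alternating tangle, a tangle part coming from the encircling curve together with its interaction with $T$, and the tangle part $Q(r)$ coming from the rational tangle $r$ attached by the Dehn filling. The coarse decomposition graph $\Gamma_D$ will then be a small graph (a few vertices, a few edges $W_T$, $W_{\mathrm{enc}}$, $W_r$) together with the region generators $R_j$. For each edge I must exhibit a \emph{universal range} in $\llbracket -\infty, +\infty\rrbracket$ so that the decomposition is \emph{nice}. For the rational-tangle edge $W_r$, Ito's computations for rational/algebraic tangles give the universal range directly (this is where $\frac{p}{q}\le 1$, equivalently the sign condition on the continued fraction, is used to guarantee the range is the full $\llbracket -\infty,+\infty\rrbracket$ rather than a bounded one); for the inner alternating tangle $T$, I would use that $T$ is connected alternating, so $N(T)$ and $D(T)$ are non-split alternating and hence quasi-alternating by Remark \ref{rmk2}, which (via Ito's framework) yields the universal range for $W_T$; the encircling part is a fixed small tangle whose range can be computed by hand.

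Then comes the core argument. Suppose $\pi_1(\Sigma_2(B,\tau(T))(\alpha))$ is left-orderable. By Theorem \ref{thito} some region generator of $\mathcal{CB}$ is non-trivial. I would derive a contradiction by showing that, on the contrary, the local coarse relations force every region generator to be trivial. The mechanism: the vanishing relation kills the unbounded region $R_0$; the universal-range relation for each edge, once one of its two adjacent regions is trivial and the edge generator is forced trivial, propagates triviality to the other adjacent region and, via Ito's observation quoted just before Theorem \ref{thito}, to all generators appearing in that subgraph. Starting from $R_0=1$ and walking across the edges $W_r$, then $W_{\mathrm{enc}}$, then $W_T$ in order, I propagate triviality through all regions of $\Gamma_D$. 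The global cycle relations and the precise combinatorics of which region is left- versus right-adjacent to which edge (read off from Figures \ref{figbis4} and \ref{figbis2}) are what make the propagation close up. The statement for $\Sigma_2(B,\tau(T))(\frac{1}{\alpha})$ follows by the same argument applied to the mirror/inverse diagram, i.e.\ replacing the attached rational tangle by its inversion, which is again alternating under the same hypothesis.

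The main obstacle I anticipate is the bookkeeping in the propagation step: one must be certain that the chosen tangle-strand decomposition is genuinely \emph{nice} (every tangle, including the "awkward" encircling piece and every possible shape of the inner alternating $T$, has full universal range), and that the adjacency pattern of regions to edges really does let triviality cascade from $R_0$ to \emph{all} regions with no region left unreached. Handling the inner tangle $T$ uniformly — since $T$ is an arbitrary connected alternating type-$2$ tangle, not a fixed one — is the delicate point; I expect to reduce it to the quasi-alternating property of $N(T)$ and $D(T)$ and an inductive smoothing argument (Remark \ref{rmk2}) showing the relevant universal range for $W_T$ is $\llbracket -\infty,+\infty\rrbracket$, so that the cascade passes through $Q(T)$ regardless of the internal structure of $T$.
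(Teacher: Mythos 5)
Your overall framework (Montesinos trick, coarse Brunner's presentation, Ito's criterion) is the right one, but the core mechanism you propose does not work. Theorem \ref{thito} does not let you argue by ``propagating triviality'' from the vanishing relation $R_0=1$: knowing that one adjacent region is trivial does \emph{not} force the corresponding edge generator or the other adjacent region to be trivial, because the local coarse relations are commutation-plus-inequality constraints (universal ranges), not equations that kill generators one at a time. Ito's observation quoted before Theorem \ref{thito} only applies once you already know $R_l(t)=R_r(t)=W_t=1$ simultaneously. If your cascade argument were valid it would prove non-left-orderability of the double branched cover of essentially any link admitting a nice decomposition, which is false. What is actually needed is an order-theoretic argument: assume a left-order exists, derive from the cycle relations and the local relations (notably $W_1^{p}=R_1^{p+q}$, $W_2=R_2^{-1}R_1$, $W_3=R_3^{-1}R_1$, $W_4=R_2$, $W_5=R_3$) a sign constraint such as ``$R_2$ and $R_3$ have opposite signs'' (this is where the algebra $R_3W_1^{q-1}R_2=W_1^{-q}$ and the hypothesis on the slope enter, cf.\ Lemma \ref{lem1}), and then build chains of inequalities of the form $1\le R_2\le R_1\le R_3\le 1$ forcing \emph{all} region generators to be trivial, contradicting Theorem \ref{thito}.

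The second gap is your treatment of the inner tangle $T$ as a single tangle part $Q(T)$ with a universal range deduced from quasi-alternatingness of $N(T)$ and $D(T)$. There is no such implication: universal ranges are computed order-theoretically for specific (elementary, rational, or iterated algebraic) tangles, and no result gives a universal range for an arbitrary connected alternating tangle; quasi-alternatingness is relevant to the $L$-space statement (Theorem \ref{theo2}), not to left-orderability. The paper's decomposition avoids this by declaring \emph{each crossing} of $T$ a tangle part $\mathcal{R}(-1)$ (whose universal range is known), so that $T$ contributes a subgraph $\Gamma_T$ with uppermost edge-path $E_k\cdots E_1$; the arbitrariness of $T$ is then handled not by a range computation but by an order argument inside $\Gamma_T$: either $\Gamma_T$ has no bounded regions and the inequalities close up directly, or one takes the $<$-maximal region $R$ of $\Gamma_T$ and shows, by comparing $R$ with $R_2$, that either all regions of $\Gamma_T\cup\{R_2,R_3\}$ coincide (hence are trivial) or one reaches $1\le R_2<R_1<R_3\le 1$. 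Without replacing your cascade by this kind of maximality-and-inequality argument, and without a correct device for handling a general alternating $T$, the proposal does not yield the theorem.
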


\begin{theo}
If $T$ is a connected alternating tangle and if $(B,\tau(T))$ is its alternating encirclement, then for any slope $\alpha = p \tilde{\gamma}_{\infty} + (p+q) \tilde{\gamma}_0$ on the torus $\partial(\Sigma_2(B,\tau(T)))$ such that $\frac{p}{q} \leq 1$, the manifolds $\Sigma_2(B,\tau(T))(\alpha)$ and $\Sigma_2(B,\tau(T))(\frac{1}{\alpha})$ are $L$-spaces.
\label{theo2}
\end{theo}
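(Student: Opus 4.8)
The plan is to obtain Theorem \ref{theo2} from the Montesinos trick together with the theorem of Ozsv\'ath and Szab\'o (\cite{ozsvath2}) that the double branched cover of a quasi-alternating link is an $L$-space. Working with the standard basis $(\tilde\gamma_\infty,\tilde\gamma_0)$ for Dehn fillings of $\Sigma_2(B,\tau(T))$, the slope in the statement is $\alpha=p\tilde\gamma_\infty+(p+q)\tilde\gamma_0$, i.e. the slope with fraction $\tfrac{p}{p+q}$, and $\tfrac1\alpha$ is the slope with fraction $\tfrac{p+q}{p}$; so the Montesinos trick gives
\[
\Sigma_2(B,\tau(T))(\alpha)\cong\Sigma_2\bigl(S^3,\,N(-\tfrac{p}{p+q}+\tau(T))\bigr),\qquad
\Sigma_2(B,\tau(T))(\tfrac1\alpha)\cong\Sigma_2\bigl(S^3,\,N(-\tfrac{p+q}{p}+\tau(T))\bigr).
\]
Thus the whole theorem reduces to the purely diagrammatic claim that, under the hypothesis $\tfrac pq\le 1$, the links $N(-\tfrac{p}{p+q}+\tau(T))$ and $N(-\tfrac{p+q}{p}+\tau(T))$ are quasi-alternating; the $L$-space conclusion then follows immediately from \cite{ozsvath2}.

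To prove this claim I would first fix an alternating diagram of $\tau(T)$: since $T$ is a connected alternating tangle which we take of type $2$, the encirclement $\tau(T)$ is a connected alternating tangle of type $1$ (the discussion around Fig.\ \ref{ae}), and I would record precisely how the standard diagram of the rational tangle $-\tfrac{p}{p+q}$ (resp.\ $-\tfrac{p+q}{p}$) must be attached to the west side of $\tau(T)$ so that the crossings at the junction respect the alternation — this is exactly where the type-$1$ property of $\tau(T)$ together with the size/sign constraint $\tfrac pq\le 1$ on the inserted fraction enters. The core of the argument is then an induction on the number of crossings of the inserted rational tangle, carried out with the recursive determinant procedure of Remark \ref{rmk2}: at each step one smooths a single crossing $c$ of the rational tangle, checks that the two resolutions $D^c_0$ and $D^c_\infty$ are, up to planar isotopy and flypes, again of the same shape — one of them carrying a shorter continued fraction, the other being a non-split reduced alternating closure of $\tau(T)$ (possibly with one extra twist), where the encircling circle of $\tau(T)$ is what guarantees non-splitness and reducedness of these closures even when $T$ itself is not strongly alternating — verifies the determinant additivity $\det(L)=\det(\mathcal L(D^c_0))+\det(\mathcal L(D^c_\infty))$ by the same smoothing bookkeeping, and invokes the inductive hypothesis together with the fact that a non-split reduced alternating diagram is quasi-alternating at every non-nugatory crossing (Remark \ref{rmk2}, i.e.\ Lemma 3.2 of \cite{ozsvath2}). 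Stripping all inserted crossings lands on a base case which is a non-split reduced alternating link, hence quasi-alternating, closing the induction.

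For the reciprocal slope $\tfrac1\alpha$ the fraction $-\tfrac{p+q}{p}$ has absolute value $\ge 2$, a somewhat different regime; I expect this to be handled either by the same induction — the first smoothings now peel off full integer twists and each time produce a link already covered by the first case — or by a flip/flype symmetry of the encircled tangle converting $N(\tfrac1t+\tau(T))$ into a closure already treated. The step I anticipate to be the main obstacle is precisely the quasi-alternating verification in the induction: one must be certain that no smoothing along the way produces a split diagram or a nugatory crossing, and that determinant additivity genuinely holds at every stage. It is the hypothesis $\tfrac pq\le 1$ that keeps the inserted rational tangle compatible with the chosen alternating diagram of $\tau(T)$ and prevents the recursion from escaping the quasi-alternating class, so the delicate part will be pinning down the boundary of that compatibility — small values of $p$ and $q$, the case $p=0$, and in particular the interaction of the encircling unknot of $\tau(T)$ with the first twist region of the rational tangle — and confirming that the base-case closures of $\tau(T)$ are indeed non-split and reduced.
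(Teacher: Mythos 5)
Your high-level route is the same as the paper's: use the Montesinos trick to identify $\Sigma_2(B,\tau(T))(\alpha)$ with $\Sigma_2(S^3,N(-\frac{p}{p+q}+\tau(T)))$, reduce the theorem to quasi-alternativity of the branch links, and conclude by Ozsv\'ath--Szab\'o \cite{ozsvath2}; your treatment of the reciprocal slope by an inversion/mirror symmetry of the encircled tangle also matches the paper's Remark \ref{rmqbis1}. The genuine gap is in your inductive verification of quasi-alternativity. The determinant additivity that you dismiss as ``the same smoothing bookkeeping'' is precisely the nontrivial content: Remark \ref{rmk2} licenses that recursive computation only for \emph{non-split alternating} diagrams, whereas for $0<\frac{p}{q}\le 1$ the diagram $n(-\frac{p}{p+q}+\tau(T))$ is not alternating (it is an augmentation of an alternating link, cf.\ Lemma \ref{lem3}), and for a non-alternating diagram the equality $\det(L)=\det(\mathcal{L}(D^c_0))+\det(\mathcal{L}(D^c_\infty))$ is exactly what must be proven, not recorded. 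The paper anchors it by closed-form determinant computations: Lemma \ref{lem2} gives $N_{\tau(T)}=D_{\tau(T)}=4(N_T+D_T)$, and Proposition 3.4 of \cite{abchir} then gives $\det(D)=\left|2N_{\tau(T)}-D_{\tau(T)}\right|=4(N_T+D_T)$ together with $\det(D^c_0)=4N_T+3D_T$ and $\det(D^c_\infty)=D_T$. Your sketch contains no mechanism for evaluating determinants of these non-alternating closures, so the induction cannot close as described. Relatedly, your picture of the inductive step is inaccurate even at the seed $n(-\frac{1}{2}+\tau(T))$: one resolution is $n(-1+\tau(T))$, which is not visibly alternating and is identified with the alternating link $N(\frac{4}{3}+(-\frac{1}{T}_{cc})_v)$ only after the isotopy of Fig.\ \ref{fig3} that rotates $T$ inside the encircling circle, while the other resolution is $D(T)$ (Fig.\ \ref{fig3bis}), not ``a closure of $\tau(T)$ with one extra twist''.

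For the passage from the seed crossing to a general fraction, note also that your hand-rolled induction is essentially a reproof of the rational-tangle extension theorem: the paper instead verifies quasi-alternativity at the single marked crossing of $n(-\frac{1}{2}+\tau(T))$, upgrades it to all three crossings of the vertical tangle $-\frac{1}{3}$ by Corollary 4 of \cite{abchir}, and then extends one of those crossings by the rational tangle $-\frac{p}{q-p}$ using Theorem 2.1 of \cite{champanerkar}. If you insist on redoing that induction yourself, the continued-fraction recursion does supply determinant additivity at each stage, but it still has to be initialized by the explicit seed computation above; as written, your argument implicitly assumes at every stage the very equality that defines quasi-alternativity.
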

In particular, by using the tangles $T_n$ shown in Fig.\ref{figadd2}, where $n$ is a positive integer, and the boxes stand for the vertical rational tangles $-\frac{1}{2n}$ or $-\frac{1}{2n-1}$, Theorems \ref{theo1} and \ref{theo2} provide a non-Seifert, non left-orderable $L$-spaces as stated in the following proposition.
\begin{figure}[H]
\centering
\includegraphics[width=0.5\linewidth]{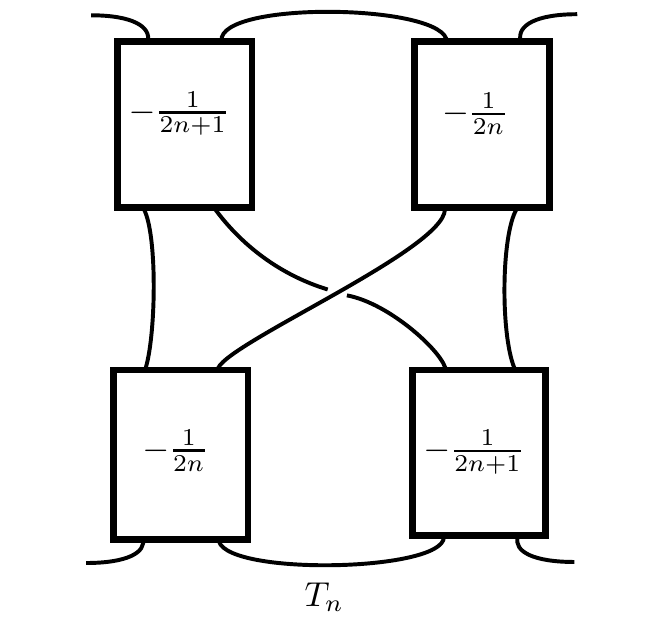} 
\caption{The tangle diagram $T_n$.}
\label{figadd2}
\end{figure}

\begin{prop}
Let $n$ be a positive integer and let $(B,\tau(T_n))$ be the alternating encirclement of the tangle $T_n$. Then for any slope $\alpha = p \tilde{\gamma}_{\infty} + (p+q) \tilde{\gamma}_0$ on the torus $\partial(\Sigma_2(B,\tau(T_n)))$ such that $\frac{p}{q} < 1$ and $p$ is even, the manifolds $\Sigma_2(B,\tau(T_n))(\alpha)$ and $\Sigma_2(B,\tau(T_n))(\frac{1}{\alpha})$ are non-Seifert fibered spaces.
\label{propadd}
\end{prop}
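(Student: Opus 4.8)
The plan is to pass to the branched–cover side via the Montesinos trick, produce an essential torus coming from the encircling circle, and show that the complementary piece of the torus decomposition is hyperbolic, so that the filled manifold cannot be Seifert fibered. By the Montesinos trick, $\Sigma_2(B,\tau(T_n))(\alpha)$ is homeomorphic to $\Sigma_2(S^3,L)$, where $L$ is the link obtained from $\tau(T_n)$ by inserting the rational tangle determined by $\alpha$ and taking the numerator closure; likewise $\Sigma_2(B,\tau(T_n))(\tfrac1\alpha)\cong\Sigma_2(S^3,L')$ for the analogous link $L'$. So it suffices to prove that $\Sigma_2(S^3,L)$ and $\Sigma_2(S^3,L')$ are not Seifert fibered. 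By Theorem \ref{theo2} these are rational homology spheres, and since $T_n$ is a connected alternating tangle the link $L$ is prime and non-split, whence $\Sigma_2(S^3,L)$ is irreducible (equivariant sphere theorem), so it carries a torus decomposition.

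Next I single out the torus. The link $L$ contains the trivial encircling circle $C$ of $\tau(T_n)$, which is an unknot in $S^3$; since $C$ lies in the branch set it lifts to a knot $\tilde C\subset\Sigma_2(S^3,L)$, and $\mathcal T:=\partial N(\tilde C)$ is a torus. Writing $W:=S^3\setminus\operatorname{int}N(C)$, a solid torus, and $L_0:=L\setminus C\subset W$, cutting along $\mathcal T$ exhibits $\Sigma_2(S^3,L)=X\cup_{\mathcal T}N(\tilde C)$ with $X=\Sigma_2(W,L_0)$ a one–cusped $3$–manifold; equivalently $\Sigma_2(S^3,L)$ is a Dehn filling of $X$ along the meridian of $\tilde C$. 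The geometric heart of the argument is then the claim that $X$ is hyperbolic: because $T_n$ is built from the two genuinely distinct nontrivial vertical twist regions $-\tfrac1{2n}$ and $-\tfrac1{2n-1}$ and $C$ encircles both of them, the tangle $\tau(T_n)$ is not a rational tangle and the pair $(W,L_0)$ is not a Seifert or product pair; ruling out essential spheres, tori and annuli in $X$ from the encircled–tangle structure (this is exactly the place where the branch links of the paper are shown to be hyperbolic) gives that $X$ is hyperbolic, in particular atoroidal, anannular and not Seifert fibered.

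Finally I combine these. Since $X$ is hyperbolic and anannular, $\mathcal T$ remains incompressible and non–boundary–parallel in $\Sigma_2(S^3,L)$ for all but a finite, explicitly controllable set of meridian slopes of $\tilde C$; the hypotheses $\tfrac pq<1$ and $p$ even are precisely what keeps the slope realising $\Sigma_2(S^3,L)$ outside this exceptional set, so $\mathcal T$ is an essential torus and $X$ is a piece of the torus decomposition of $\Sigma_2(S^3,L)$. A Seifert fibered rational homology sphere is either atoroidal or a graph manifold, and neither possibility admits a hyperbolic piece, so $\Sigma_2(S^3,L)$ is not Seifert fibered; in fact, away from the same finite set of slopes one gets the stronger statement that $\Sigma_2(S^3,L)$ is itself hyperbolic. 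The identical argument applied to $L'$ gives the statement for $\Sigma_2(B,\tau(T_n))(\tfrac1\alpha)$.

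The main obstacle is the hyperbolicity of $X$ together with the slope bookkeeping: one must prove hyperbolicity \emph{uniformly} over the whole family $\{T_n\}_{n\ge 1}$ — rather than checking individual members with SnapPy — which requires a hands-on incompressibility and anannularity analysis of $X$ that genuinely uses that $T_n$ has two distinct nontrivial twist regions surrounded by $C$, and then one must verify that every exceptional (non-hyperbolic, or Seifert fibered) filling slope of $X$ falls outside the region $\{\tfrac pq<1,\ p\text{ even}\}$. Both of these are delicate; the rest of the argument is formal once they are in place.
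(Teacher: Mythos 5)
Your overall strategy is genuinely different from the paper's, and as it stands it has two gaps that the paper's argument is specifically designed to avoid. The paper does not go through geometrization of the cover at all: it invokes the classification result (quoted from the introduction of \cite{mecchia}) that a link whose double branched cover is Seifert fibered must be a Seifert link or a Montesinos link, and then shows (Lemma \ref{lemadd}) that $L_n=N(-\frac{p}{p+q}+\tau(T_n))$ has exactly two components, one trivial and the other a knot which is neither a torus knot nor a rational knot, so that Meier's Lemma 2.7 and Criteria 2.15 \cite{meier} rule out both possibilities. Your central geometric claim --- that $X=\Sigma_2(W,L_0)$ (equivalently, the unfilled piece) is hyperbolic --- is asserted but not proved, and the parenthetical suggestion that this is ``exactly the place where the branch links of the paper are shown to be hyperbolic'' conflates two different statements: the paper proves (via Adams \cite{adams}, Lemma \ref{lem3}) that the \emph{branch links} are hyperbolic, which says nothing about the double branched cover being hyperbolic or even non-Seifert. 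Indeed Proposition \ref{prop1} of the paper exhibits fillings of exactly this type (with $T$ rational) whose branch links are augmented alternating, hence hyperbolic, and whose double branched covers \emph{are} Seifert fibered; so the implication you need cannot come from hyperbolicity of the link and must instead use the specific structure of $T_n$, which you only gesture at.

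Even granting hyperbolicity of the unfilled manifold, the filling argument does not deliver the statement as claimed. Thurston's hyperbolic Dehn surgery excludes only a finite set of slopes, with no a priori control of which ones; you would obtain ``all but finitely many slopes,'' not the asserted conclusion for \emph{every} slope with $\frac{p}{q}<1$ and $p$ even, and the claim that these hypotheses ``are precisely what keeps the slope outside the exceptional set'' is unsupported (in the paper the parity of $p$ plays a completely different role: it guarantees, via the component count in Lemma \ref{lemadd}, that the non-trivial part of $L_n$ is a single knot, which is what Meier's criteria require). There is also a concrete error in the torus argument: $\mathcal T=\partial N(\tilde C)$ bounds the filling solid torus $N(\tilde C)$, hence is compressible in $\Sigma_2(S^3,L)$ for every filling, so it can never be an essential torus and cannot exhibit $X$ as a JSJ piece; the only viable version of your route is ``non-exceptional filling of a hyperbolic $X$ is hyperbolic, hence not Seifert fibered,'' which again runs into both gaps above. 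Note also that your $X$ depends on $\frac{p}{q}$ (the rational tangle sits inside $L_0$), so the uniform-in-slope bookkeeping would have to be redone for each filling or reorganized around the fixed manifold $\Sigma_2(B,\tau(T_n))$.
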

Before giving the proofs of these results in the next section, we look into some particular cases.
\subsection{Special cases}
Among the rational homology $3$-spheres we have considered in Theorems \ref{theo1} and \ref{theo2}, there are many Seifert fibered spaces. In the following proposition we give a surgery description of these $3$-manifolds in some particular cases.

\begin{prop}
If $T$ is the rational tangle $-\dfrac{a}{b}$ such that $a,b > 0$, then the manifold $\Sigma_2(B,\tau(T))(\frac{1}{2})$ is a Seifert fibered space. Moreover, if $T$ is an integer tangle, then there exists an integer $k$, $|k|\ge 1$, such that the manifold $\Sigma_2(B,\tau(T))(\frac{1}{2})$ can be obtained from the sphere $S^3$ by an integer surgery along the torus knot $T(2,2k+1)$. 
\label{prop1}
\end{prop}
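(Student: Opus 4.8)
\textbf{Proof plan for Proposition \ref{prop1}.}

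The plan is to unpack both assertions via the Montesinos trick, which identifies $\Sigma_2(B,\tau(T))(\tfrac{1}{2})$ with the double branched covering of $S^3$ along the link $N(-2 + \tau(T))$ (the slope $\tfrac12$ with respect to the standard basis corresponds to attaching the rational tangle $-2$ to $\tau(T)$ and taking the numerator). First I would analyze the branch link: with $T = -\tfrac{a}{b}$ the encircled tangle $\tau(T)$ is an algebraic tangle built from $T$ and the encircling loop, so $N(-2+\tau(T))$ is obtained by closing up an algebraic tangle and is therefore a Montesinos link. I would compute its fraction parameters explicitly using the tangle arithmetic recalled in the Preliminaries ($p + t + q = t+p+q$, $t * \tfrac1p = \tfrac{1}{p+1/t}$, etc.) together with the description of the encirclement $\tau(T)$; the point is to rewrite $N(-2+\tau(T))$ in the form $M(e; t_1, \dots, t_n)$ with at most three rational parameters. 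Once the branch set is a Montesinos link, the Seifert-fibered conclusion is immediate from the cited fact that the double branched cover of $S^3$ over a Montesinos link is a Seifert fibered space \cite{montesinos2}.

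For the second assertion, when $T$ is the integer tangle $T = -a$ (so $b = 1$), the encircled tangle $\tau(-a)$ simplifies further: the encircling trivial loop around $a$ vertical (or horizontal) crossings, after attaching $-2$ and taking the numerator, should collapse to a very small Montesinos link — I expect a link of the form $N(\text{small integer} + \tfrac{1}{2} + \tfrac{1}{c})$ for some integer $c$ depending on $a$, i.e. essentially a two-bridge or small Montesinos link. The strategy is then to identify $\Sigma_2(S^3, N(-2+\tau(-a)))$ with a surgery on $S^3$ along the lift of the natural spanning arc, using the band-surgery description given at the end of the Montesinos-trick subsection: one exhibits the branch link as the result of a band surgery on the unknot $U$, checks that the complementary tangle $(S^3 \setminus \accentset{\circ}{B}, U \cap (S^3 \setminus \accentset{\circ}{B}))$ has a lift whose double cover is $S^3$ with the lifted arc a knot $K$, and then recognizes $K$ as the torus knot $T(2,2k+1)$. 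Concretely, the encircling circle of $\tau(T)$ is exactly the kind of unknotted band-core whose lift to $\Sigma_2(S^3, \cdot)$ produces a $(2, 2k+1)$-torus knot, because the double cover of a rational tangle of the relevant type is a solid torus in which the arc lifts to a curve winding around twice; tracking the twisting parameter $a$ through the cover yields the odd integer $2k+1$, and the surgery coefficient comes out to be an integer because of the specific slope $\tfrac12$ chosen (the $-2$ tangle attached is integral). I would make the integer $k$ explicit as a function of $a$ and note $|k| \ge 1$ since $T$ is a genuine (nonzero, non-$\pm1$) integer tangle.

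The main obstacle I anticipate is the bookkeeping in the Montesinos-trick / band-surgery step: one must set up the decomposition of $S^3$ into the ball $B$ (a regular neighborhood of the core arc $\alpha$ of the encircling band) and its complement, correctly identify which rational tangles sit in $B$, compute the fraction $-\tfrac{p}{q}$ of the band tangle, and then carry out the genus-one (double cover of a $4$-punctured-sphere complement) computation that shows the lifted arc is isotopic to the standard $(2,2k+1)$-torus knot in $S^3$ with the advertised integral framing. Getting the sign and the exact value of $k$ right — and confirming it lies in the stated range — requires care, but it is a finite explicit computation once the tangle is put in standard Montesinos form. A convenient cross-check is that the double branched cover of $S^3$ over the torus link $T(2, \cdot)$-type branch set is a lens space when the surgery is trivial, and more generally the first homology of $\Sigma_2(B,\tau(-a))(\tfrac12)$ computed from the Goeritz/linking form of the branch link must match $|H_1|$ of the claimed surgered manifold $S^3_{n}(T(2,2k+1))$; verifying this equality pins down $k$ unambiguously.
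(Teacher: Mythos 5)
Your overall strategy coincides with the paper's: use the Montesinos trick to identify $\Sigma_2(B,\tau(T))(\tfrac{1}{2})$ with the double branched cover of an explicit link, recognize that link as a Montesinos link to conclude Seifert fiberedness, and, in the integer-tangle case, realize the branch set by a band surgery on an unknot so that the lifted band core is a $(2,\text{odd})$ torus knot and the filling becomes an integer surgery on $S^3$. However, two concrete points in your plan need repair. First, your slope-to-tangle dictionary is wrong: with the paper's convention $\Sigma_2(B,A)(\tfrac{p}{q})\cong\Sigma_2(S^3,N(-\tfrac{p}{q}+A))$, the filling $\tfrac{1}{2}$ corresponds to attaching the vertical rational tangle $-\tfrac{1}{2}$, not the integer tangle $-2$. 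The link you propose to analyze, $N(-2+\tau(T))$, is the branch set of $\Sigma_2(B,\tau(T))(2)$, a different manifold from the one in the statement, so all of your subsequent parameter computations would be carried out on the wrong link. Second, the inference ``$N(-2+\tau(T))$ is the closure of an algebraic tangle and is therefore a Montesinos link'' is not valid: closures of algebraic tangles are Conway's arborescent links, whose double covers are graph manifolds but need not be Seifert fibered; being Montesinos means having the specific form $N(e+\tfrac{1}{t_1}+\dots+\tfrac{1}{t_n})$ with rational $t_i$, and that is precisely what must be verified. You do say you would compute the fraction parameters explicitly, which would close this gap, but only if done on the correct link.

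For comparison, the paper handles the first assertion by the isotopy to the augmented form (Fig.~\ref{fig4} with $\tfrac{p}{q}=1$), which identifies $N(-\tfrac{1}{2}+\tau(-\tfrac{a}{b}))$ with the Montesinos link $M(0;-2,2,\tfrac{2a+b}{a+b})$, and then invokes Montesinos' theorem. For the second assertion it exhibits a band surgery on $D(T)$, which is the unknot $U$ when $T=-m$ is integral (Fig.~\ref{fig5}), isotopes $D(T)\cup\alpha$ to $U\cup\alpha'$, and shows the lift of $\alpha'$ in $\Sigma_2(S^3,U)=S^3$ is the torus knot $T(2,-2m-1)$ (Fig.~\ref{fig6}), the integrality of the surgery coefficient coming from the Montesinos trick. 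Your band-surgery outline and homology cross-check are in the same spirit and should go through once the branch set is corrected to $N(-\tfrac{1}{2}+\tau(T))$ and the Montesinos form is established by explicit computation rather than by appeal to algebraicity.
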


In order to show Proposition \ref{prop1}, we will need some background and Lemma \ref{lem3}. 

Let $D$ be a reduced alternating projection of a nontrivial non-split alternating link $L$. Let $J$ be an embedded circle in the complement of $L$ in $S^3$ such that $J$ intersects the projection plane in two points and bounds a disk that lies in a plane perpendicular to the projection plane. The link $L \cup J$ is called an \textit{augmentation} of $L$. If $L$ is prime and non-isotopic to any torus link $T(2,k)$, then the link $L \cup J$ is called an \textit{augmented alternating link}. Recall that the torus link $T(2,k)$ is the link $D(\frac{1}{k})$ which is the only alternating torus link. Adams proved in \cite{adams} that augmented alternating links are hyperbolic. 

Let $T$ be a connected alternating tangle and $\tau(T)$ its alternating encirclement. If $\frac{p}{q}$ is a rational number such that $ 0 < \frac{p}{q} \leq 1$, then the link $N(-\frac{p}{p+q} + \tau(T))$ is isotopic to an augmentation of the alternating link $N((T*(-1)) + (-\frac{p}{q}))$ as explained in Fig. \ref{fig4}. The bottom left diagram in Fig. \ref{fig4} is called the \textit{augmented form} of the link $N(-\frac{p}{p+q} + \tau(T))$. 

\begin{figure}[H]
\centering
\includegraphics[width=0.7\linewidth]{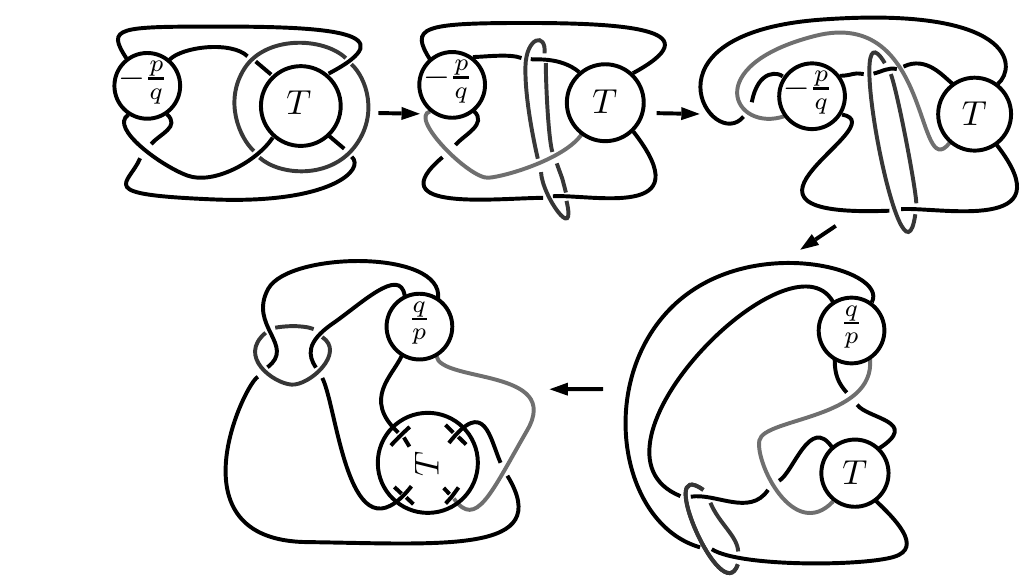}
\caption{Isotopies bringing the link $N(-\frac{p}{p+q}+\tau(T))$ into an augmentation of an alternating link.}
\label{fig4}
\end{figure}

\begin{lem}
If $T$ is a connected alternating tangle and if $\tau(T)$ is its alternating encirclement, then for any rational number $\frac{p}{q}$, $0 < \frac{p}{q} \leq 1$, the link $N(-\frac{p}{p+q} + \tau(T))$ is an augmentation of a non-trivial non-split alternating link. Moreover, if $T$ is locally unknotted, and if the determinants $N_T$ and $D_T$ satisfy $N_T > 1$ and $D_T \geq c(T)$ where $c(T)$ is the number of crossings in the tangle diagram $T$, then the link $N(-\frac{p}{p+q} + \tau(T))$ is an augmented alternating link, and so it is a hyperbolic link. 
\label{lem3}
\end{lem}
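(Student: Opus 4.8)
\emph{Plan of proof.} The strategy is to realize $N(-\frac{p}{p+q}+\tau(T))$ in the augmented form $D\cup J$ described just before the statement (Fig.~\ref{fig4}) and then to check, one clause at a time, the conditions in the definitions of an augmentation and of an augmented alternating link; hyperbolicity will follow at once from Adams' theorem \cite{adams}. Concretely, the isotopies of Fig.~\ref{fig4} produce a diagram $D$ of the link $L:=N\big((T*(-1))+(-\frac{p}{q})\big)$ drawn in a plane $\Pi$, together with the descendant $J$ of the trivial encircling curve of $\tau(T)$, which is an unknotted round circle meeting $\Pi$ transversally in two points, encircling two parallel strands of $D$, and bounding a flat disk in the plane orthogonal to $\Pi$; so $D\cup J$ already has the shape of an augmentation, and everything reduces to properties of $D$ and $L$.

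For the first assertion I would check that $D$ is a \emph{connected, reduced, alternating} diagram of a \emph{non-split, non-trivial} link. Alternation and connectedness: using that $T$ is a connected alternating tangle of type $2$ — so that $\tau(T)$, hence after the isotopy the block $(T*(-1))+(-\frac{p}{q})$, is alternating — together with $0<\frac{p}{q}\le 1$, which makes all twists of the standard diagram of $-\frac{p}{q}$ of one sign and makes that diagram concatenate with $T*(-1)$ without breaking the over/under pattern, one sees that $D$ is alternating and connected. Reducedness: a nugatory crossing of $D$ would either lie inside $T$, producing a splitting loop and contradicting connectedness of $T$, or lie in the rational block or at the $T*(-1)$ junction, which $0<\frac{p}{q}\le 1$ excludes. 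Thus $D$ is a connected reduced alternating diagram with $c(D)=c(T)+1+c(-\frac{p}{q})\ge 1$ (the crossings of $T$, the single crossing from $*(-1)$, and the crossings of the standard diagram of $-\frac{p}{q}$), so by the Tait conjectures (Kauffman--Murasugi--Thistlethwaite for crossing-number minimality, Menasco for non-splitness) $L$ is a non-split, non-trivial alternating link, and $D\cup J$ exhibits $N(-\frac{p}{p+q}+\tau(T))$ as an augmentation of $L$.

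For the second assertion I would, under the additional hypotheses, verify the two remaining clauses in the definition of an augmented alternating link. Primeness of $L$: by Menasco's characterization this amounts to showing $D$ is a prime alternating diagram, i.e. admits no simple closed curve in $\Pi$ meeting it transversally in two points with crossings on both sides; such a curve would descend to a factorizing circle of $\tau(T)$, and the structure of the alternating encirclement together with the local unknottedness of $T$ (and, where needed, the bound $D_T\ge c(T)$) forces the enclosed arc to be unknotted and then one of the two sides to be crossingless, a contradiction. That $L$ is not isotopic to any $(2,k)$-torus link: here I would use Conway's additivity of the determinant under tangle sum to write $\det L$ as an explicit bilinear expression in $N_T$, $D_T$ and the numerator and denominator determinants of $-\frac{p}{q}$, and compare it with $c(L)=c(T)+1+c(-\frac{p}{q})$ using the elementary inequality $\det L'\ge c(L')$ valid for every reduced alternating link $L'$ — which holds because $\det L'$ equals the number of spanning trees of the Tait graph and a connected bridgeless multigraph has at least as many spanning trees as edges, with equality precisely for the Tait graphs of $(2,n)$-torus links — to conclude from $N_T>1$ and $D_T\ge c(T)$ that $\det L>c(L)$, so $L$ is no $T(2,k)$. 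With both clauses in hand, $D\cup J$ is an augmented alternating link, and Adams' theorem \cite{adams} gives that it is hyperbolic.

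The step I expect to be the main obstacle is the primeness of $L$: local unknottedness of $T$ only excludes a \emph{knotted} factorizing arc, not a connected sum of two nontrivial summands, so the argument must genuinely use the specific geometry of the alternating encirclement $\tau(T)$ (and possibly the determinant bound $D_T\ge c(T)$) to rule out every visible connected-sum circle in $D$. A secondary technical point is the determinant bookkeeping — fixing the precise Conway additivity formula for $N\big((T*(-1))+(-\frac{p}{q})\big)$ and handling the boundary value $\frac{p}{q}=1$, where $-\frac{p}{q}=-1$, uniformly with the rest. Everything else is routine diagram manipulation together with the cited theorems of Kauffman--Murasugi--Thistlethwaite, Menasco, and Adams.
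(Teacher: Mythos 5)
Your overall route is the same as the paper's: pass to the augmented form of Fig.~\ref{fig4}, so that the link is the augmentation circle $J$ added to $L=N\bigl((T*(-1))+(-\frac{p}{q})\bigr)$; deduce that $L$ is a non-trivial non-split alternating link from its connected reduced alternating diagram; rule out the torus links $T(2,k)$ by comparing $\det(L)$ with $c(L)$; and finish with Adams' theorem. Your determinant bookkeeping is a legitimate variant of the paper's (the paper uses Proposition 3.4 of \cite{abchir} together with Remark \ref{rmk2} to get $\det(L)=(m+1)N_T+mD_T$ in the integer case, gets $c(L)=m+1+c(T)$ from Corollary 1 of \cite{thistlethwaite2}, and then inducts on the length of the continued fraction of $\frac{q}{p}$, whereas you propose a direct additivity formula plus a Bankwitz-type inequality), and the first assertion of the lemma is handled adequately.

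The genuine gap is precisely the step you flag yourself: primeness of $L$, which is needed for the ``augmented alternating link'' clause. Your sketch --- that a $2$-point compositeness circle in the diagram of $L$ ``descends to a factorizing circle of $\tau(T)$'' and is then excluded by local unknottedness --- is not an argument. After the isotopy of Fig.~\ref{fig4} the encircling component has been split off as $J$, so the relevant diagram is $n\bigl((T*(-1))+(-\frac{p}{q})\bigr)$; a circle meeting it in two points with crossings on both sides need not lie in the projection disk of $T$ at all (it can run through the rational block $-\frac{p}{q}$ or through the closure arcs), and pushing such a circle into the disk of $T$, where local unknottedness applies, is exactly the nontrivial content that is missing. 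Moreover, the hypothesis $D_T\ge c(T)$, $N_T>1$ plays no role in primeness --- in the paper it is used only to get $\det(L)>c(L)$ and hence $L\ne T(2,k)$ --- so your suggestion that the determinant bound might be ``needed'' there indicates the argument is not in hand. The paper disposes of this step by quoting Lemma 2.2 of \cite{abchir} (for a locally unknotted connected alternating tangle the reduced alternating diagram $n\bigl((T*(-1))+(-\frac{p}{q})\bigr)$ is a prime diagram) together with Theorem 1 of \cite{menasco}; without that lemma, or an equivalent circle-isotopy argument supplied in full, your proof of the second assertion is incomplete.
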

\begin{proof}
  Let $\left[ a_1,...,a_s \right]$ be the continued fraction of the rational tangle $\frac{q}{p}$. If $s=1$, then $\frac{q}{p} = m$, where $m \geq 1$ is an integer. By Proposition 3.4 in \cite{abchir}, the determinant of the alternating link $L=N((T*(-1)) + (-\frac{p}{q}))$ is equal to $mN_{T*(-1)}+D_{T*(-1)}$. It is easy to see that $N(T*(-1))$ is isotopic to $N(T)$. By Remark \ref{rmk2} we have $D_{T*(-1)} = N_T + D_T$. Finaly, we get that $\det(L)=(m+1)N_T + mD_T$. On the other hand, since $n((T*(-1)) + (-\frac{1}{m}))$ is an alternating reduced and non-split diagram of $L$, then by Corollary 1 in \cite{thistlethwaite2} the crossing number $c(L)$ is equal to the number of crossings in $n((T*(-1)) + (-\frac{1}{m}))$ which is $m+1+c(T)$, where $c(T)$ is the number of crossings in the tangle diagram $T$. Then we note that if $D_T \geq c(T)$ and $N_T > 1$, then $c(L) < \det(L)$. But, this inequality is not satisfied by the torus links $T(2,k)$ for which we have $c(T(2,k)) = \det(T(2,k)) = k$. Hence, whenever we have $D_T \geq c(T)$ and $N_T > 1$, the link $L$ will not be equivalent to $T(2,k)$ for any integer $k$. Moreover, if $T$ is locally unknotted, then the reduced alternating link diagram $n((T*(-1)) + (-\frac{1}{m}))$ is prime by Lemma 2.2 in \citep{abchir}. Consequently, if $T$ is locally unknotted, then the link $L$ is prime by Theorem 1 in \cite{menasco}.\\
Now, a simple induction argument on $s$ shows that if $N_T > 1$ and $D_T \geq c(T)$, then
$$\det(N((T*(-1)) + (-\frac{p}{q}))) > c(N((T*(-1)) + (-\frac{p}{q}))).$$
Furthermore, if $T$ is locally unknotted, then the alternating link $N((T*(-1)) + (-\frac{p}{q}))$ is prime by Lemma 2.2 in \citep{abchir} and Theorem 1 in \cite{menasco}. This shows that the link $N(-\frac{p}{p+q} + \tau(T))$ is an augmentation of a prime alternating link which is not a torus link, and hence is a hyperbolic link.
\end{proof}

\begin{proof}[Proof of Proposition \ref{prop1}]
\item Assume that $T$ is the rational tangle $-\frac{a}{b}$, $a > 0$. The augmented form of the link $N(-\frac{1}{2} + \tau(T))$, as depicted in the bottom left corner of Fig. \ref{fig4} when $\frac{p}{q}=1$, corresponds to the Montesinos link $M(0;-2,2,\frac{2a+b}{a+b})$. Then if $T$ is rational, the Dehn filling $\Sigma_2(B,\tau(T))(\frac{1}{2})$ is a Seifert fibered space. 

In Fig. \ref{fig5}, we exhibit a band-surgery on the link $D(T)$ that provides the augmented form of the link $N(-\frac{1}{2}+\tau(T))$. 

\begin{figure}[H]
\centering
\includegraphics[width=0.7\linewidth]{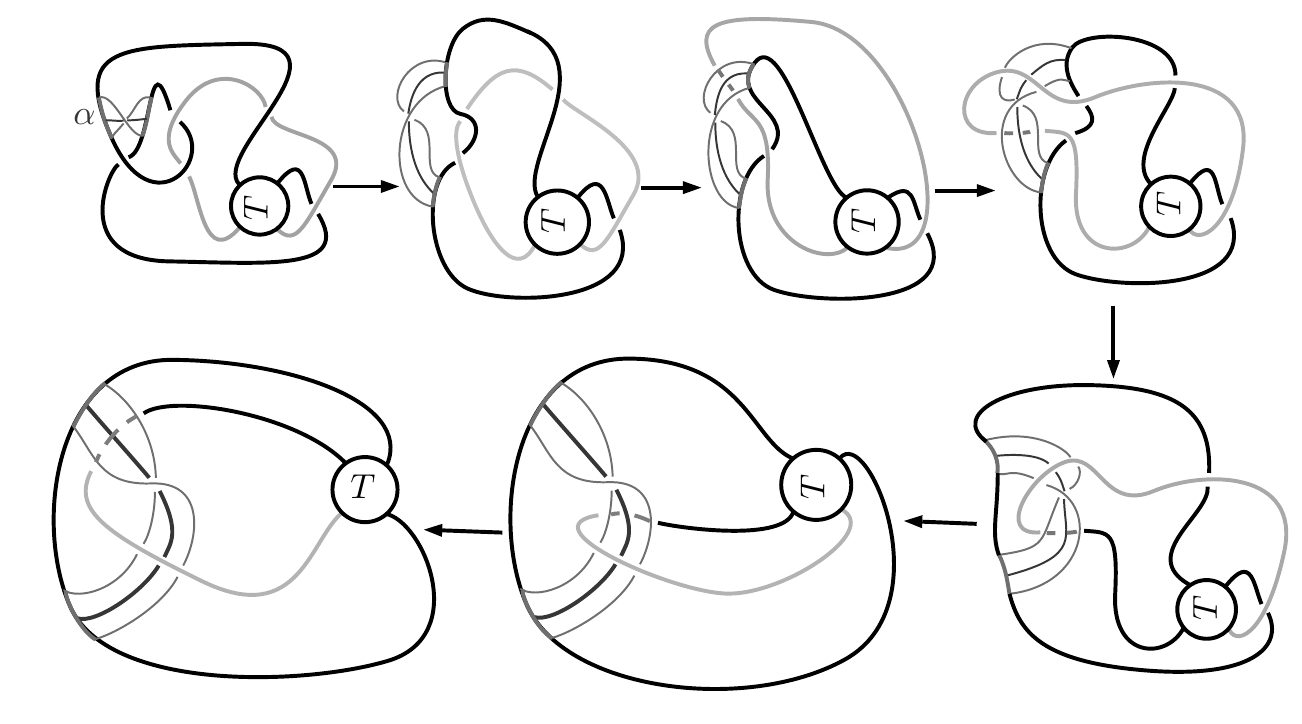}
\caption{A band-surgery on the link $D(T)$ which provides the link $N(-\frac{1}{2}+\tau(T))$.}
\label{fig5}
\end{figure}

Assume now that $\frac{a}{b}=m$. Note that in this case, the link $D(T)$ is the trivial knot $U$. Let $K_{\alpha}$ be the lift of the arc $\alpha$ depicted in Fig. \ref{fig5} in $\Sigma_2(S^3,D(T)) \cong S^3$. By the Montesinos trick, the space $\Sigma_2(B,\tau(T))(\frac{1}{2}) = \Sigma_2(S^3,N(-\frac{1}{2}+\tau(T)))$ is obtained by an integer Dehn surgery in $M$ along the knot $K_{\alpha}$. The 1-manifold $D(T)\cup\alpha$ is isotopic in $S^3$ to $U\cup\alpha'$, where $\alpha'$ is a simple arc in $S^3$ with endpoints on $U$ as depicted in the top of Fig. \ref{fig6}. The lift $K_{\alpha'}$ of the arc $\alpha'$ in $\Sigma_2(S^3,U)=S^3$ turns out to be the torus knot $T(2,-2m-1)$ as explained in the bottom of Fig. \ref{fig6}. Finally, we get that the space $\Sigma_2(B,\tau(-m))(\frac{1}{2})=\Sigma_2(S^3,N(-\frac{1}{2}+\tau(T)))$ is provided by an integer Dehn surgery on $S^3$ along the torus knot $T(2,-2m-1)$.
\begin{figure}[H]
\centering
\includegraphics[width=0.7\linewidth]{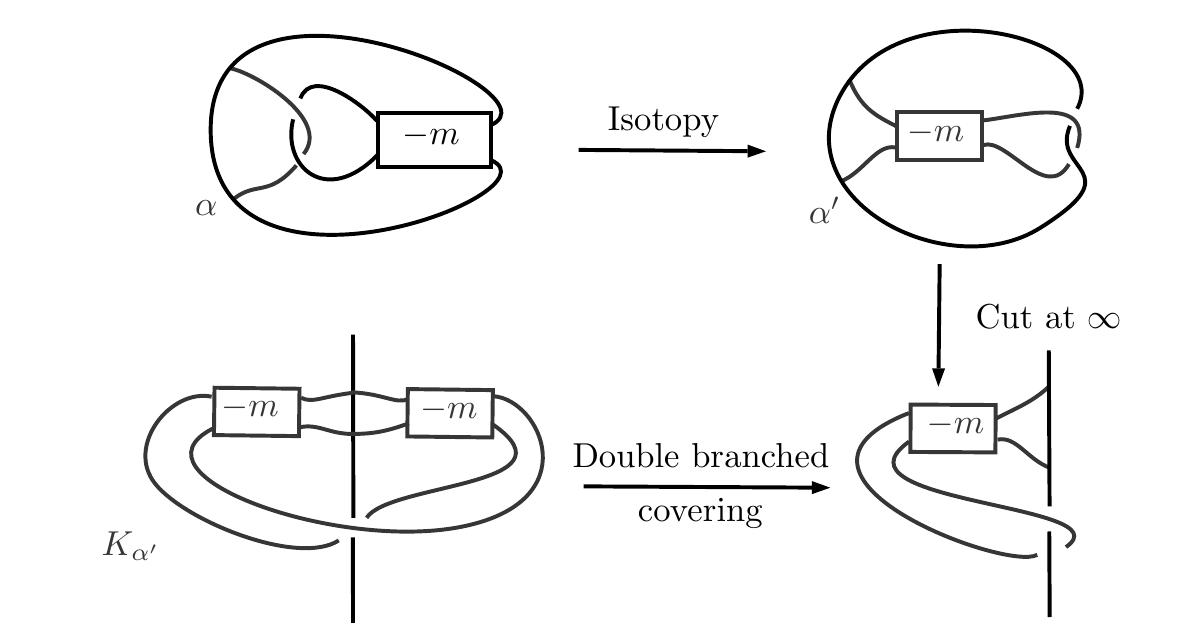}
\caption{A description of an isotopy transforming $D(T)\cup\alpha$ into $U\cup\alpha'$ (top of the figure). The arc $\alpha'$ lifts to the torus knot $T(2,-2m-1)$ (bottom of the figure).}
\label{fig6}
\end{figure}
\end{proof}

\begin{rmk}
\begin{enumerate}
\item[1.] We know by Proposition \ref{prop1} that $\Sigma_2(B,\tau(-\frac{a}{b}))(\frac{1}{2})$ is a Seifert fibered space. Moreover, Theorem \ref{theo1} shows that it is non-left-orderable and Theorem \ref{theo2} shows that it is an $L$-space. So this matches the fact that Seifert fibered spaces satisfy Conjecture \ref{conj1} (see \cite{boyer}).
\item[2.] Theorem \ref{theo2} shows that $\Sigma_2(B,\tau(-m))(\frac{1}{2})$ in an $L$-space and Proposition \ref{prop1} shows that it can be obtained by an integer Dehn surgey on $S^3$ along the torus knot $T(2,-(2m+1))$. So, we get an other way to prove that the torus knots $T(2,2m+1)$ are $L$-space knots as was firstly shown by Ozsv{\'a}th and Szab{\'o} in \cite{ozsvath}.
\end{enumerate}  
\end{rmk}

\section{Proof of main results}
At first, we note that the link $N(-\frac{p}{p+q}+\tau(T))$ has the diagram $D$ depicted in the left of Fig. \ref{fig1}. We construct a nice tangle-strand decomposition of $D$ as follows: each crossing of the tangle $T$ is regarded as a tangle part. The other tangle parts are specified in Fig. \ref{fig1}. By our sign convention, each tangle part inside $T$ is the elementary tangle $\mathcal{R}(-1)$. We denote by $\Gamma_D$ the obtained coarse-decomposition graph. 

Denote by $\Gamma_T$ the sub-graph of $\Gamma_D$ corresponding to the sub-diagram $T$. Let $E_k ... E_1$ be the uppermost edge-path in $\Gamma_T$. The coarse-decomposition graph $\Gamma_D$ is depicted in the right of Fig. \ref{fig1}. 
 
\begin{figure}[H]
\centering
\includegraphics[width=1\linewidth]{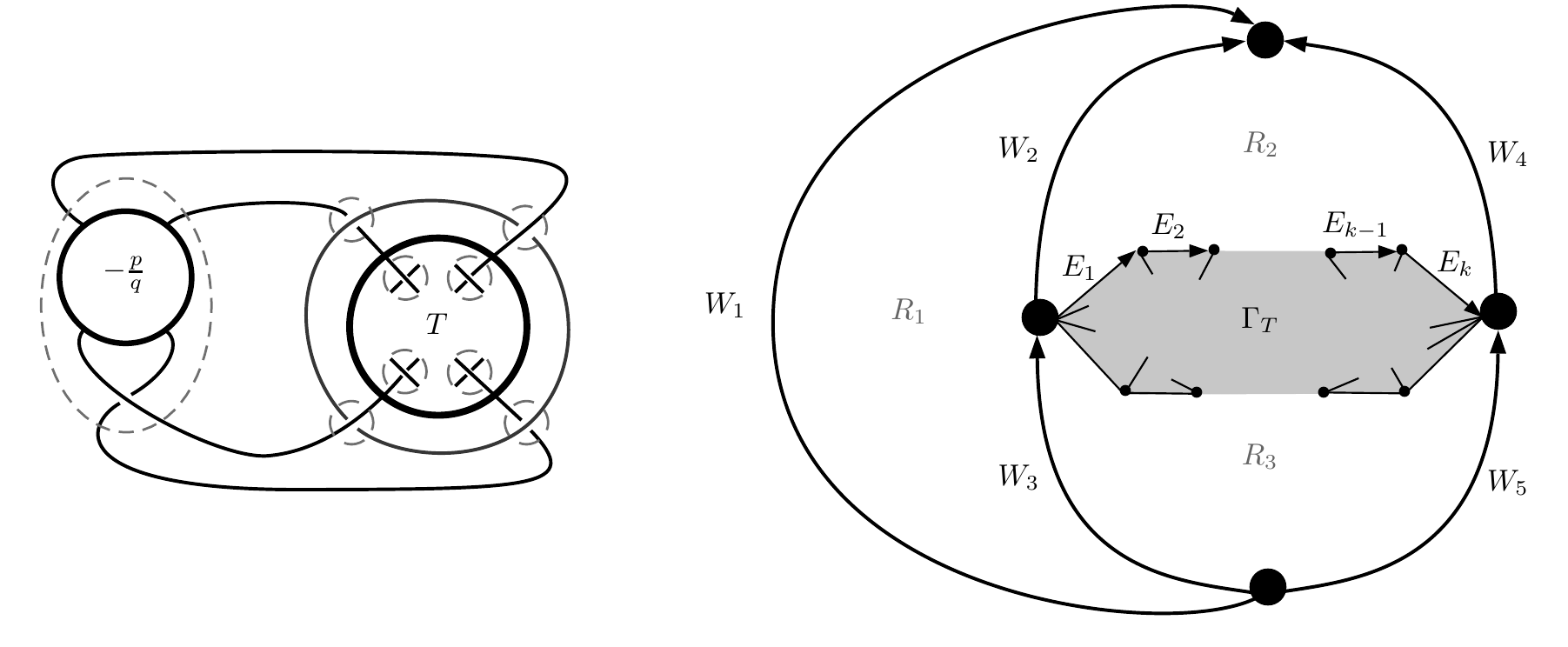}
\caption{The link diagram $D$ representing the link $N(-\frac{p}{p+q}+\tau(T))$ with a specified nice tangle-strand decomposition (left), and the associated coarse-decomposition graph $\Gamma_D$ (right).}
\label{fig1}
\end{figure}

The following is a partial description of the coarse Brunner's presentation provided by the coarse-decomposition graph on the right in Fig.\ref{fig1}.\\

\noindent\textbf{Tangle generators:} $ \left\lbrace W_i \right\rbrace_ {1 \leq i \leq 5}$.\\

\noindent\textbf{Region generators:} $\left\lbrace R_i \right\rbrace_ {1 \leq i \leq 3}$.\\

\noindent\textbf{Global cycle relations:}
The cycles in the considered graph give the relations
\begin{equation}
W_1^{-1}W_2W_3=1,\  W_5^{-1}W_4^{-1} W_2W_3=1\,\,{\rm and}\,\,W_1^{-1}W_4W_5=1.\label{cycle_relations}
\end{equation}
Hence $W_1 = W_2W_3 = W_4W_5.$\\

\noindent\textbf{Local coarse relations:} By applying Corollary 3.6 in \cite{ito} we deduce the following relations.
\begin{equation}
W_1^p = R_1^{p+q}\,,\, W_2=R_2^{-1}R_1\,,\, W_3=R_3^{-1}R_1\,,\, W_4=R_2\,,\, W_5=R_3.\label{rel_edges_regions}
\end{equation}
In particular we get that
$$W_1=W_4W_5=R_2R_3.$$
\begin{rmk}
If $t$ and $T$ are respectively a rational tangle of type 1 and an alternating tangle of type 2, the link diagram $n(t+T)$ is equivalent, up to mirror image, to the link diagram $n(\frac{1}{t} + \frac{1}{T}_c)$ as shown in the figure below. If $T$ is an alternating encircled tangle, then it is clear that $\frac{1}{T}_c$ is again an alternating encircled tangle. Hence, one can only restrict to the case where $0<\left| t \right| < 1$ when considering the numerator closure of $t$ summed with an alternating encircled tangle diagram.
\begin{figure}[H]
\centering
\includegraphics[width=1\linewidth]{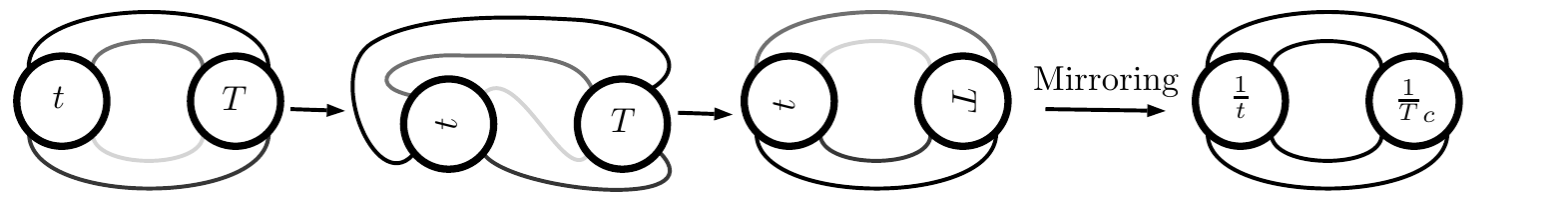}  
\end{figure}
\label{rmqbis1}
\end{rmk}

\begin{rmk}
Let $T$ be a connected alternating tangle and $(B,\tau(T))$ its alternating encirclement. Let $\alpha$ be the slope $\frac{p}{p+q}$ with respect to the standard basis for Dehn fillings of $\Sigma_2(B,\tau(T))$.
The previous remark implies that $\Sigma_2(S^3,N(\frac{p+q}{p}+\tau(T)))$ and $\Sigma_2(S^3,N(-\frac{p}{p+q}+\tau(-\frac{1}{T}_c)))$ are homeomorphic. This is equivalent to $\Sigma_2(B,\tau(T))(\frac{1}{\alpha})$ and $\Sigma_2(B,\tau(-\frac{1}{T}_c)))(-\alpha)$ are homeomorphic. This will allow us to prove our main results only for the manifold $\Sigma_2(B,\tau(T))(\alpha)$. A simple adaptation of signs in our argument will provide the same results for the manifold $\Sigma_2(B,\tau(T))(\frac{1}{\alpha})$.
\end{rmk}

The following remark allows us to reduce the cases that must be studied.
\begin{rmk} We note that if $\frac{p}{q} <0$, the manifolds $\Sigma_2(B,\tau(T))(\alpha)$ and $\Sigma_2(B,\tau(T))(\frac{1}{\alpha})$ are double branched coverings of non-split alternating links. So they are non-left-ordrerable $L$-spaces. Then we will restrict ourselves in the proofs to the case $0< \frac{p}{q}\le 1$.
\end{rmk}

\begin{lem}
If $\pi_1(\Sigma_2(B,\tau(T))(\frac{p}{p+q}))$ is left-orderable, then the region generators $R_2$ and $R_3$ have opposite signs.
\label{lem1}
\end{lem}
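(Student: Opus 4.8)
The plan is to argue by contradiction, using the coarse Brunner presentation displayed above together with Ito's criterion (Theorem \ref{thito}). Suppose $\pi_1(\Sigma_2(B,\tau(T))(\frac{p}{p+q}))$ admits a left-order $<$ and, for contradiction, that $R_2$ and $R_3$ have the same sign with respect to $<$, that is, both are $\ge 1$ or both are $\le 1$. The two cases are interchanged by passing to the mirror diagram, which replaces each interior crossing $\mathcal{R}(-1)$ by $\mathcal{R}(+1)$, keeps $T$ alternating, and does not change the fundamental group, so I will only treat the case $R_2\ge 1$ and $R_3\ge 1$.

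First I would unwind \eqref{cycle_relations} and \eqref{rel_edges_regions}. From $W_1=W_4W_5$, $W_4=R_2$ and $W_5=R_3$ we get $W_1=R_2R_3$, so left-invariance of $<$ gives $W_1=R_2R_3\ge R_2\ge 1$. Since $0<\frac{p}{q}\le 1$ forces $p,q>0$, hence $p\ge 1$ and $p+q\ge 1$, plugging $W_1\ge 1$ into $W_1^p=R_1^{p+q}$ and using that $g^n\ge 1$ implies $g\ge 1$ in a left-order yields $R_1\ge 1$. Next, combining $W_1=W_2W_3=R_2^{-1}R_1R_3^{-1}R_1$ with $W_1=R_2R_3$ and multiplying on the left by $R_2$ and on the right by $R_3^{-1}$ gives $R_2^{2}=(R_1R_3^{-1})^{2}$, whence $R_1R_3^{-1}\ge 1$ because $R_2^2\ge 1$. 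I also record that $W_1$ commutes with $R_1$ by Lemma 3.3 of \cite{ito}, $R_1$ being the element $R_l(t)^{-1}R_r(t)$ attached to the rational tangle $-\frac{p}{p+q}$.

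The heart of the proof is to contradict these inequalities using the part of $\Gamma_D$ coming from the sub-diagram $T$, and this is where the hypothesis that $T$ is a \emph{connected alternating} tangle is essential: since every interior crossing of $T$ is the elementary tangle $\mathcal{R}(-1)$, a universal range of $T$, equivalently of its tangle element $W_T=E_k\cdots E_1$, is contained in the negative rationals, so $W_T$ and $R_l(T)^{-1}R_r(T)$ necessarily have opposite signs in $<$. Expressing $W_T$ through the global cycle relations of $\Gamma_D$ in terms of $W_1,\dots,W_5$ and $R_1,R_2,R_3$ and combining with the inequalities $R_1,R_2,R_3\ge 1$ and $R_1R_3^{-1}\ge 1$ produces the contradiction. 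Equivalently, the sign data forces $R_l(T)=R_r(T)$ as group elements, hence $W_T=1$ since the local coarse relation then pins $W_T$ to $\llbracket P_T,Q_T\rrbracket_{1}=\{1\}$, and then Ito's observation that $R_l(t)=R_r(t)=W_t=1$ trivializes every edge and region generator appearing in $\tilde{G}_t$ kills all region generators of the presentation, contradicting Theorem \ref{thito}.

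I expect this last step to be the main obstacle: determining precisely the universal range of the alternating tangle $T$ — this is exactly where ``alternating'' and the niceness of the chosen tangle--strand decomposition enter — and transporting the resulting sign information across $T$ to its two boundary regions $R_2$ and $R_3$. The manipulations in the outer graph in the second paragraph are routine, and the case $R_2,R_3\le 1$ follows by running the same argument on the mirror diagram.
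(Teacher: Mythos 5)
Your plan has a genuine gap at exactly the step you flag as ``the heart'' of the argument. You assert that, because all crossings of $T$ are of type $\mathcal{R}(-1)$, the whole connected alternating tangle $T$ admits a universal range contained in the negative rationals, so that $W_T$ and $R_l(T)^{-1}R_r(T)$ have opposite signs, and that this sign information can then be pushed to force $R_l(T)=R_r(T)$, $W_T=1$, and a contradiction with Theorem \ref{thito}. None of this is established: Ito's machinery (Corollary 3.6 of \cite{ito}) produces universal ranges only for rational (and, by iteration, certain algebraic) tangles, not for an arbitrary connected alternating tangle, and the paper's tangle--strand decomposition treats each crossing of $T$ as its own tangle part precisely to avoid needing such a statement. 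Likewise, ``expressing $W_T$ through the global cycle relations and combining with the inequalities'' is not a routine step one can defer: deciding how the order behaves on the interior regions and edges of $\Gamma_T$ is the delicate global part of the whole paper, and it is carried out only later, in the proof of Theorem \ref{theo1} (the maximal-region argument), where Theorem \ref{thito} is finally invoked. In effect your by-contradiction version of the lemma is at least as hard as Theorem \ref{theo1} itself, and its key ingredient is left unproven; note also that ruling out ``$R_2\ge 1$ and $R_3\ge 1$'' outright is stronger than what the lemma asserts or what is needed (the paper only uses the implication $R_2\ge 1\Rightarrow R_3\le 1$ and its mirror).

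By contrast, the paper's proof of Lemma \ref{lem1} never looks inside $T$ and uses no contradiction and no appeal to Theorem \ref{thito}. It manipulates only the outer relations \eqref{cycle_relations} and \eqref{rel_edges_regions}: from $W_2=R_2^{-1}R_1$ and $W_2=W_4W_5W_3^{-1}=R_2R_3R_1^{-1}R_3$ one gets $R_3R_1^{-1}R_2=R_1W_1^{-1}$; raising to the power $p+q$, using the commutation of $W_1$ with $R_1$ (Lemma 3.3 of \cite{ito}) and the relation $W_1^p=R_1^{p+q}$ gives the identity $R_3W_1^{q-1}R_2=W_1^{-q}$; a two-case analysis on the sign of $W_1$ (using $W_1=R_2R_3$ in one case and the displayed identity in the other) then yields $R_3\le 1$ whenever $R_2\ge 1$, and symmetrically. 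Your opening computations ($W_1=R_2R_3\ge 1$, $R_1\ge 1$, $R_2^2=(R_1R_3^{-1})^2$) are correct and in the same spirit, but they do not by themselves reach the conclusion; to repair your proof you should either complete it along the paper's purely algebraic route, or actually prove the universal-range statement for general connected alternating tangles that your version requires.
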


\begin{proof} We start from the local coarse relation $W_2=R_2^{-1}R_1$ in (\ref{rel_edges_regions}). Then $R_2^{-1}W_2=R_2^{-2}R_1$. The second global cycle relation in (\ref{cycle_relations}) yields $W_2=W_4W_5W_3^{-1}$. By using local coarse relations again, we get that  $W_2=R_2R_3R_1^{-1}R_3$. Hence $R_2^{-2}R_1= R_3R_1^{-1}R_3$. Now by using the relation $W_1=R_2R_3$ we note that:
\begin{align*}
   R_2^{-2}R_1 &= R_3R_1^{-1}R_3 \\
\Leftrightarrow  R_2^{-1}R_1R_3^{-1} &= (R_2R_3)R_1^{-1}\\
\Leftrightarrow  R_2^{-1}R_1R_3^{-1} &= W_1R_1^{-1} \\
\Leftrightarrow  R_3R_1^{-1}R_2 &= R_1W_1^{-1} \\
\end{align*}
This implies that 
\begin{equation}
(R_3R_1^{-1}R_2)^{p+q} = (R_1W_1^{-1})^{p+q}
\label{eq1}
\end{equation}
By Lemma 3.3 in \cite{ito}, the generators $W_1^{\pm1}$ and $R_1^{\pm1}$ commute and a simple induction argument shows that $$(R_3R_1^{-1}R_2)^k = R_3R_1^{-k}W_1^{k-1}R_2, \text{ for any integer } k \geq 1.$$ Finally, one can transform the equality (\ref{eq1}) and get the following:
\begin{equation}
R_3R_1^{-(p+q)}W_1^{p+q-1}R_2 = R_1^{p+q}W_1^{-(p+q)} \Leftrightarrow R_3W_1^{q-1}R_2 = W_1^{-q}
\label{eq2}
\end{equation}
We will prove the result for $R_2 \geq 1$. The other case can be shown in a similar way. 

\textbf{Case 1:} $W_1^{q-1} \leq 1$. 

Since $q \geq 1$, then necessarily $W_1 \leq 1$. Since $W_1 = R_2R_3$, this implies that $R_3 \leq R_2^{-1}$. So the result follows from the assumption that $R_2 \geq 1$. 

\textbf{Case 2:} $W_1^{q-1} \geq 1$.

In this case, one has that $R_3W_1^{q-1} \geq R_3$. On the other hand, the assumption $R_2 \geq 1$ implies that $R_3W_{1}^{q-1}R_2 \geq R_3W_{1}^{q-1}$. Hence, by the equality (\ref{eq2}) one gets that $W_1^{-q} \geq R_3$. But since, $W_1^{q-1} \geq 1$ and $q \geq 1$, then necessarily one has that $W_1 \geq 1$, which implies that $1 \geq W_1^{-q} \geq R_3$. 

This completes the proof.
\end{proof}

\begin{proof}[Proof of Theorem \ref{theo1}]
Suppose that $\pi_1(\Sigma_2(B,\tau(T))(\frac{p}{p+q}))$ is left-orderable. Assume that $R_2 \geq 1$, the other case can be shown in a similar way by interchanging the roles of $R_2$ and $R_3$. By Lemma \ref{lem1}, we have $R_3 \leq 1 \leq R_2$. 

If the graph $\Gamma_T$ has no regions (we exclude here the unbounded region of $\Gamma_T$ since we consider it only as a sub-graph), then it is clear that the only edges of $\Gamma_T$ are the edges $E_i=R_3^{-1}R_2$, $1 \leq i \leq k$. This implies that $E_i \geq 1$, for every $1 \leq i \leq k$. Consequently, we will have the following:
 
\begin{align*}
1 \leq R_2 &\leq R_2 (E_k...E_1)  \\
\Rightarrow 1 &\leq W_4(E_k...E_1) \\
\Rightarrow 1 &\leq W_2 \\
\Rightarrow R_2 &\leq R_1 
\end{align*}
We note that the third of the last inequalities is obtained by the global cycle relation $1=W_2^{-1}W_4E_k...E_1$, which comes from the boundary loop of the region $R_2$.

And also, we have
\begin{align*}
1 \leq R_3^{-1} &\leq R_3^{-1} (E_k...E_1)  \\
\Rightarrow 1 &\leq W_5^{-1}(E_k...E_1) \\
\Rightarrow 1 &\leq W_3^{-1} \\
\Rightarrow R_1 &\leq R_3 
\end{align*}
Also, in the last inequalities, we note that the third one is obtained by the global cycle relation $$1=W_3^{-1}W_5E_k...E_1,$$ coming from the boundary loop of the union of the region $R_2$ and the graph $\Gamma_T$ (shaded region in Fig. \ref{fig1}).

Finally, we get $ 1 \leq R_2 \leq R_1 \leq R_3 \leq 1$. This is a contradiction by Theorem \ref{thito}.

Assume now that the sub-graph $\Gamma_T$ has at least one region and let $R$ denote the $<$ -maximal region among the regions of $\Gamma_T$. The cycle that constitutes the boundary of the region $R$ can be expressed as $(e_l...e_1)(f_1^{-1}...f_n^{-1})=1$, where $R_r(e_i)=R_l(f_j)=R$ for every $1 \leq i \leq l$ and $1 \leq j \leq n$. We have $e_i =R^{-1}R_l(e_i)$ and $f_j=R_r(f_j)^{-1}R$. 

\textbf{Case 1:} $R \geq R_2$. In this case, we have $e_i \leq 1$ and $f_j \geq 1$ for every $1 \leq i \leq l$ and $1 \leq j \leq n$. Suppose that there exists some $1 \leq i_0 \leq l$ such that $R_l(e_{i_0}) < R$, then $e_{i_0} < 1$. Hence, we get the following:
\begin{align*}
e_{i_0}(e_{i_0 -1}...e_1) &\leq e_{i_0} < 1 \\
\Rightarrow (e_l...e_1) &< e_l...e_{i_0+1} \leq 1 \\
\Rightarrow (e_l...e_1)(f_1^{-1}...f_n^{-1}) &<1
\end{align*}

The last inequality contradicts the fact that $(e_l...e_1)(f_1^{-1}...f_n^{-1})=1$. Similarly, we get a contradiction if we suppose that there exists some $1 \leq j_0 \leq n$ such that $R_r(f_{j_0}) < R$. Finally, for all $1 \leq i \leq l$ and $1 \leq j \leq n$, we have $R_l(e_i)=R_r(f_j)=R$. This shows that any region in $\Gamma_T \cup \left\lbrace R_2,R_3 \right\rbrace$ which shares an edge with $R$ is equal to $R$. If we adapt the previous argument to the regions sharing edges with $R$, we show that each region that shares an edge with these regions is again equal to $R$. We iterate this process until we show that all regions of $\Gamma_T \cup \left\lbrace R_2,R_3 \right\rbrace$ are equal to $R$. Hence $1 \leq R_2=R=R_3 \leq 1$. And since $W_1 = R_2R_3=1$ and $W_1^{p} = R_1^{p+q} = 1$, then $R_1=1$. Hence, all region generators in the coarse-Bunner's presentation are trivial. This is a contradiction by Theorem \ref{thito}. 

\textbf{Case 2:} $R < R_2$. In this case, we have $E_i > 1$ for every $1 \leq i \leq k$. Hence 
\begin{align*}
1 \leq R_2 &< R_2 (E_k...E_1)  \\
\Rightarrow 1 &< W_4(E_k...E_1) \\
\Rightarrow 1 &< W_2 \\
\Rightarrow R_2 &< R_1 
\end{align*}

And also, we have
\begin{align*}
1 \leq R_3^{-1} &< R_3^{-1} (E_k...E_1)  \\
\Rightarrow 1 &< W_5^{-1}(E_k...E_1) \\
\Rightarrow 1 &< W_3^{-1} \\
\Rightarrow R_1 &< R_3 
\end{align*}

Finally, we get $ 1 \leq R_2 < R_1 < R_3 \leq 1$. Which is a contradiction

\end{proof}

\begin{lem}
If $T$ is a connected alternating tangle and $\tau(T)$ is its alternating encirclement, then
$$ N_{\tau(T)}=D_{\tau(T)}=4(N_T+D_T).$$
\label{lem2}   
\end{lem}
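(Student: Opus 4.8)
The plan is to compute both determinants from a single diagram of $\tau(T)$ by repeated smoothing, using Remark \ref{rmk2}. First I would fix the standard alternating diagram of $\tau(T)$ coming from Fig. \ref{ae}: it is the alternating diagram of $T$ together with the crossings created by the encircling circle $c$, which form two two-crossing (half-twist) bunches of opposite sign. Since $T$ is connected alternating and the encirclement is alternating of type $1$, the link diagrams $N(\tau(T))$ and $D(\tau(T))$ are non-split reduced alternating, so Remark \ref{rmk2} applies to each of them: the determinant can be obtained by successively smoothing non-nugatory crossings and adding the determinants of the links produced, until only trivial knots remain. The whole computation is therefore reduced to an accounting of what happens when the crossings of $c$ are smoothed and then the crossings of $T$ are reduced.

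For the equality $N_{\tau(T)}=D_{\tau(T)}$ I would note that, with the encircling circle drawn symmetrically around $T$ as in Fig. \ref{ae}, the $\pi/2$-rotation of the projection disk (which interchanges the numerator and denominator closures) carries the diagram of $\tau(T)$ to an equivalent one, so $N(\tau(T))$ and $D(\tau(T))$ are actually isotopic links; alternatively, the two smoothing computations below are literally the same sum after relabelling. To evaluate $N_{\tau(T)}$ I would then smooth, one at a time, the four crossings of $c$. Within each two-crossing bunch, one smoothing makes the bunch nugatory (hence deletable by Remark \ref{rmk2}, leaving a diagram containing the other bunch together with $T$) while the other connects the two strands across; iterating Remark \ref{rmk2} through a bunch thus produces a linear expansion in which that bunch contributes a factor $2$. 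After both bunches of $c$ are expanded, the surviving diagrams are exactly the two ways the resulting arcs can be joined back onto $T$, namely copies of $N(T)$ and of $D(T)$ (up to mirror/flip, which does not change the determinant), any extra state producing a split trivial circle and hence contributing $0$. Because every intermediate diagram is alternating, all these contributions add with the same sign, so there is no cancellation and one gets $N_{\tau(T)}=2\cdot 2\,(N_T+D_T)=4(N_T+D_T)$. The same count can be made in one stroke with the Kauffman–bracket (Goeritz) skein expansion at the determinant specialization, where $\delta=0$ kills every state of the crossings of $c$ that creates a closed circle and leaves precisely the terms $N_T$ and $D_T$, each with the multiplicity coming from the two half-twist bunches.

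The main obstacle I anticipate is purely the bookkeeping: one must (i) check that the diagrams obtained after partial smoothing of $c$ remain non-split and alternating (dealing separately with the nugatory crossings that appear, whose contributions are only trivial-knot base cases), and (ii) pin down that the terminal links are $N(T)$ and $D(T)$ with total multiplicity $4$ each, rather than some other constant. Drawing the smoothings of the two bunches of $c$ explicitly on Fig. \ref{ae}, exactly as is done for the related links elsewhere in the paper, should make both points routine and complete the proof.
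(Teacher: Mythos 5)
Your plan is sound and rests on the same basic mechanism as the paper's proof, namely iterated use of the determinant additivity of Remark \ref{rmk2} on the alternating closed-up encirclement, but you push the expansion further than the paper does. The paper smooths only until links with already-known determinants appear, namely $D(-\frac{1}{2}*T)$ (determinant $2N_T+D_T$ by Proposition 3.4 of \cite{abchir}) and $D(T)\,\#\,T(2,2)$ (determinant $2D_T$), and then deduces $N_{\tau(T)}=D_{\tau(T)}$ from the fact that the formula $4(N_T+D_T)$ is symmetric under tangle inversion together with $N(\tau(T))=D(-\frac{1}{\tau(T)}_c)$. You instead expand all four crossings of the encircling circle down to terminal diagrams and identify them as copies of $N(T)$, $D(T)$, or split diagrams of determinant $0$; this is self-contained (no appeal to \cite{abchir}), and it does work: in the Tait-graph picture the circle contributes exactly a $4$-cycle through the two side regions, and deletion--contraction on those four edges gives $D(T)$-type contributions with multiplicity $2\cdot 2=4$ and $N(T)$-type contributions with multiplicity $2+2=4$, i.e.\ $4(N_T+D_T)$, with no cancellation since every intermediate diagram stays alternating.

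Two details of your sketch, however, need repair before it is a proof. First, the four new crossings are not two half-twist bunches: the circle meets four distinct strands once each, so ``one smoothing makes the bunch nugatory'' is not an accurate description, and the factor-of-$2$-per-bunch product does not literally hold for the $N(T)$-type terms (they arise as $2+2$, not $2\times 2$, as noted above); the totals are still $4$ each, but this must be justified state by state, which is exactly the bookkeeping you flag. Second, your primary justification of $N_{\tau(T)}=D_{\tau(T)}$ is false as stated: a $\pi/2$ rotation does not carry the diagram of $\tau(T)$ to an equivalent one, it carries it (up to mirror image) to the encirclement of the rotated tangle $T'$, so it only yields $N_{\tau(T)}=D_{\tau(T')}$ with $(N_{T'},D_{T'})=(D_T,N_T)$. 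Your fallback -- that the two smoothing computations are the same sum after exchanging $N_T$ and $D_T$, and that $4(N_T+D_T)$ is symmetric -- is precisely the paper's argument and is the one you should keep.
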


\begin{proof}
Since $D(\tau(T))$ is a non-split alternating link, then we can compute its determinant by smoothing one by one non-nugatory crossings. This is done in Fig. \ref{determinant} where the obtained links are labeled according to our notations. The determinant of the link $D(-\frac{1}{2}*T)$ is $ 2N_T+D_T $ by Proposition 3.4 in \cite{abchir}, and the determinant of the link $D(T) \# T(2,2)$ is equal to $\det(T(2,2)) \times D_T = 2D_T$. This gives that $D_{\tau(T)} = 2N_T+D_T + 2D_T = 4(N_T+D_T)$. Now, since the obtained formula for $D_{\tau(T)}$ is unaffected by the inverse operation on tangles, and since $N(\tau(T))$ and $D(-\frac{1}{\tau(T)}_c)$ are the same, then $N_{\tau(T)}$ is equal to $D_{\tau(T)}$.

\begin{figure}[H]
\centering
\includegraphics[width=0.7\linewidth]{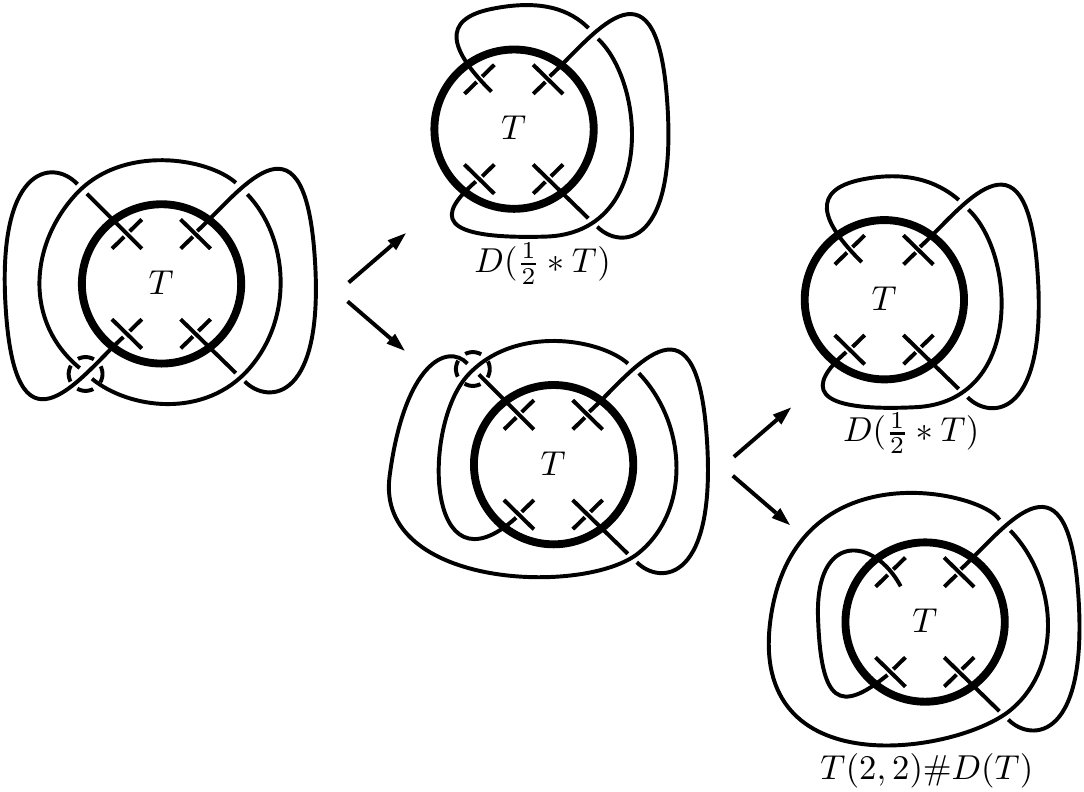}
\caption{Smoothing non-nugatory crossings of the link $D(\tau(T))$ until reaching some links with known determinants.}
\label{determinant}
\end{figure}

\end{proof}

\begin{proof}[Proof of Theorem \ref{theo2}]
Let $T$ be a connected alternating tangle and $\tau(T)$ be its alternating encirclement. Let $D$ be the link diagram $n(-\frac{1}{2}+\tau(T))$ depicted in Fig. \ref{fig2}. We will show that $D$ is quasi-alternating at the marked crossing $c$. 

\begin{figure}[H]
\centering
\includegraphics[width=0.3\linewidth]{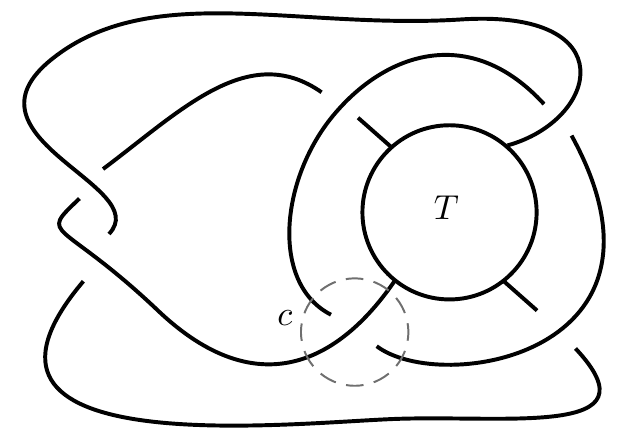}
\caption{The link diagram $D$ representing the link $N(-\frac{1}{2}+\tau(T))$.}
\label{fig2}
\end{figure}

As explained in Fig. \ref{fig3},\ref{fig3bis}, the links $\mathcal{L}(D^c_0)$ and $\mathcal{L}(D^c_{\infty})$ are respectively equivalent to the links $N(\frac{4}{3}+(-\frac{1}{T}_{cc})_v)$ and $D(T)$, which are alternating links. 

\begin{figure}[H]
\centering
\includegraphics[width=1\linewidth]{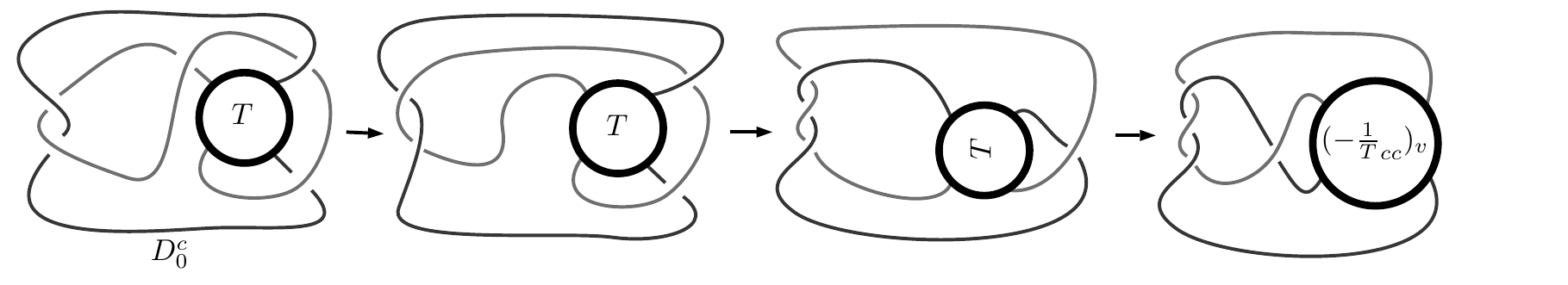}
\caption{An isotopy bringing the link $\mathcal{L}(D^c_0)$ into $N(\frac{4}{3}+(-\frac{1}{T}_{cc})_v)$.}
\label{fig3}
\end{figure}

\begin{figure}[H]
\centering
\includegraphics[width=1\linewidth]{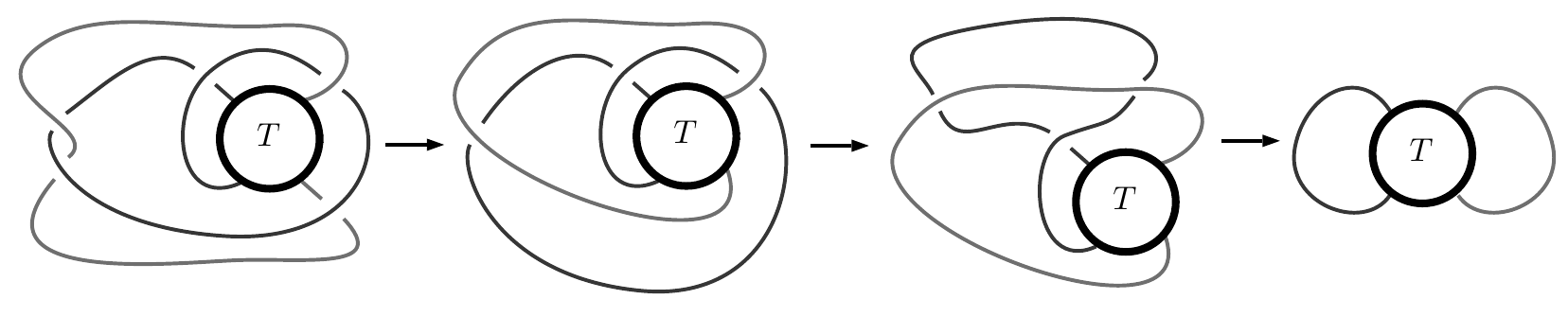}
\caption{An isotopy bringing the link $\mathcal{L}(D^c_{\infty})$ into $D(T)$.}
\label{fig3bis}
\end{figure}

By Proposition 3.4 in \cite{abchir}, we have that $\det(D^c_0)=4N_T+3D_T$, which is a non-zero integer. This shows that $\mathcal{L}(D^c_0)$ is a non-split alternating link. This is also the case for the link $\mathcal{L}(D^c_{\infty})$ by connectedness of the tangle $T$. on the other hand, by Lemma \ref{lem2}, and Proposition 3.4 in \cite{abchir}, we have that $\det(D)=\left| 2N_{\tau(T)}-D_{\tau(T)} \right| = 4(N_T+D_T)$. This shows that $\det(D)=\det(D^c_0) + \det(D^c_{\infty})$, and hence the link diagram $D$ is quasi-alternating at the crossing $c$. Now, since $\mathcal{L}(D)$ is the link $N(-\frac{1}{2}+\tau(T))$, then by Corollary 4 in \cite{abchir}, the link diagram $n(-\frac{1}{3}+\tau(T))$ is quasi-alternating at each of the three crossings of the elementary vertical tangle $-\frac{1}{3}$. We can extend the top one by the rational tangle $-\frac{p}{q-p}$ and obtain the link $N(-\frac{p}{p+q}+\tau(T))$. Thus, the link $N(-\frac{p}{p+q}+\tau(T))$ is quasi-alternating by Theorem 2.1 in \cite{champanerkar}. This shows that $\Sigma_2(B,\tau(T))(\frac{p}{p+q}) \cong \Sigma_2(S^3,N(-\frac{p}{p+q}+\tau(T)))$ is an $L$-space. 

\end{proof}

It remains for us to prove Proposition \ref{propadd}. Our main argument is based on the following remark: It is known that if the double branched covering of a link $L$ is a Seifert fibered space, then $L$ is either a Seifert link or a Montesinos link.\\
We note that, for each integer $n>1$, if $T_n$ is the tangle in Fig. \ref{figadd2}, the link $L_n=N(-\frac{p}{p+q} + \tau(T_n))$ is neither a Seifert link nor a Montesinos link. In this way, we get an infinite family of non-Seifert fibered $3$-manifolds which are non-left-orderable $L$-spaces. To do that, we will need the following lemma:
  \begin{lem}
If $ 0 < \frac{p}{q} < 1$ is a rational number such that $p$ is even, then for any integer $n > 0$, the link $L_n=N(-\frac{p}{p+q} + \tau(T_n))$ has two components one of which is trivial and the other is neither rational nor a torus knot.
\label{lemadd}
\end{lem}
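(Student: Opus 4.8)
The plan is to analyze the link $L_n = N(-\frac{p}{p+q} + \tau(T_n))$ by first understanding its component structure and then ruling out the "forbidden" forms. Since $p$ is even, write $p/(p+q)$ and note that the parity of $p$ controls how the strands of the rational tangle $-\frac{p}{p+q}$ connect the four endpoints of $\tau(T_n)$: with $p$ even, the numerator closure produces a two-component link rather than a knot. First I would pin down which two components these are. One of them is the trivial encircling circle coming from the encirclement $\tau$ together with the arcs of the rational tangle it closes off; a direct diagrammatic isotopy (pulling the trivial loop off, in the spirit of Fig. \ref{fig4} and Fig. \ref{fig5}) shows this component is an unknot. The other component is built from the tangle $T_n$ (whose interior boxes are the vertical rational tangles $-\frac{1}{2n}$ or $-\frac{1}{2n-1}$) closed up through the remaining strand of $-\frac{p}{p+q}$; I would identify this second component explicitly as a closure of an algebraic tangle obtained from $T_n$.

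Next I would show the second component is not rational. Here I would use the augmented-form description: by Lemma \ref{lem3}, for $0 < \frac{p}{q} \le 1$ the link $N(-\frac{p}{p+q}+\tau(T))$ is an augmentation of the non-split alternating link $N((T*(-1)) + (-\frac{p}{q}))$, and the second component of $L_n$ is (isotopic to) the alternating-link component underneath this augmentation, i.e.\ essentially $N((T_n*(-1)) + (-\frac{p}{q}))$ with one circle removed. Because $T_n$ has at least two twist regions of the specified type together with the encirclement, the resulting knot has an alternating diagram that is reduced, prime (using local unknottedness of $T_n$ and Lemma 2.2 in \cite{abchir} together with Theorem 1 in \cite{menasco} as in the proof of Lemma \ref{lem3}), and has crossing number at least $3$; a reduced alternating prime diagram of a rational (two-bridge) knot is forced to be the standard $4$-plat diagram, and the diagram for our component visibly is not of that shape once $n>0$ forces several distinct twist regions. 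Equivalently, I can compute the bridge number via the Montesinos/plat structure and see it exceeds $2$, or compare determinants using Proposition 3.4 in \cite{abchir}. Either route shows the component is non-rational.

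Finally I would rule out the second component being a torus knot $T(2,k)$. This is the same mechanism used at the end of the proof of Lemma \ref{lem3}: a reduced alternating prime diagram of $T(2,k)$ satisfies $c = \det$, whereas for our component the crossing-number estimate from Corollary 1 in \cite{thistlethwaite2} and the determinant formula from Proposition 3.4 in \cite{abchir} (as computed for $T_n$ using Remark \ref{rmk2} and the explicit boxes $-\frac{1}{2n}$, $-\frac{1}{2n-1}$) give $c < \det$. Since $T(2,k)$ is the only alternating torus knot (recall $T(2,k) = D(\frac{1}{k})$), this rules out all torus knots. Assembling: one component unknotted, the other a non-split alternating prime knot that is neither two-bridge nor a $T(2,k)$, which is exactly the claim.

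The main obstacle I anticipate is the precise bookkeeping in the component-identification step — tracking, as a function of the parity of $p$, exactly how the four endpoints of $\tau(T_n)$ get paired by the rational tangle $-\frac{p}{p+q}$ and verifying that the encircling circle genuinely splits off as an unknotted component (rather than linking the rest in a way that changes the count). Once the two components and the diagram of the non-trivial one are correctly identified, the non-rationality and non-torus-knot parts are routine applications of the alternating-knot machinery (Menasco–Thistlethwaite, Proposition 3.4 in \cite{abchir}) already invoked in Lemma \ref{lem3}.
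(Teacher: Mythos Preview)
Your overall strategy matches the paper's: pass to the augmented form via Lemma~\ref{lem3}, identify the augmenting circle as the trivial component, identify the remaining piece as $K_n := N((T_n*(-1)) + (-\tfrac{p}{q}))$, and then rule out rational and torus using alternating-link machinery. The non-torus step via $c < \det$ is exactly what the paper does (indeed it is already the conclusion of Lemma~\ref{lem3}).

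There is, however, a genuine gap in the component count. You invoke the parity of $p$ at the level of how $-\tfrac{p}{p+q}$ pairs the endpoints of $\tau(T_n)$, but that alone does not pin down the number of components of $L_n$: once you are in the augmented form you always have (unknotted circle)\,$\cup\,K_n$, and the real question is whether $K_n$ itself is a \emph{knot}. This is where ``$p$ even'' is actually used, together with the design feature that $N(T_n)$ is a knot. The paper rewrites $K_n$ as $N(T_n + (-\tfrac{p}{q}*(-1)))$, observes (via Kauffman--Lambropoulou \cite{kauffman}) that $n(-\tfrac{p}{q}*(-1))$ has two components, one through each numerator-closure arc, precisely because its numerator determinant equals $p$ and $p$ is even, and then checks that splicing these two arcs into the single component of $N(T_n)$ collapses everything to one component. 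Your sketch places the parity argument at the wrong stage and omits the role of $N(T_n)$ being a knot; as written it does not establish that the ``other component'' is a single component.

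For non-rationality your ``visibly not a $4$-plat'' heuristic is the right intuition, but it needs a theorem behind it. The paper's route is cleaner than a bridge-number computation: $T_n$ is strongly alternating, hence a non-rational tangle by Lickorish--Thistlethwaite \cite{lickorish}, so the alternating diagram $n(T_n + (-\tfrac{p}{q}*(-1)))$ is not a standard rational diagram; Thistlethwaite's result \cite{thistlethwaite3} that every alternating diagram of a rational link is standard then forces $K_n$ to be non-rational.
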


Recall that a \textit{rational link} is the closure of a rational tangle. If a link diagram is the closure of a standard rational tangle diagram then it is called a \textit{standard rational link diagram}. It is shown in \cite{thistlethwaite3} (Theorem 4.1 and Proposition 5.2) that any alternating link diagram of a rational link is a standard rational diagram.

\begin{proof}
  Since the tangle diagram $T_n$ is strongly alternating, then by Corollary 1 in \cite{thistlethwaite2} we have that $c(T_n) = c(N(T_n)) = c(D(T_n))$. Hence, by Proposition 3.1 in \cite{qazaqzeh}, we have that $1 < c(T_n) \leq N_T$ and $c(T_n) \leq D_T$. Moreover, since $T_n$ is locally unknotted, then by Lemma 3.4, the link $L_n$ is an augmented alternating link. So $L_n$ has a trivial component. More precisely, it is an augmentation of the prime, alternating, and non-torus link $K_n:=N((T_n * (-1)) + (-\frac{p}{q}))$. To complete the proof of the lemma, it remains to show that $K_n$ is a non-rational knot.\\
  To see that $K_n$ is non-rational, we consider the particular diagram $D_n := n(T_n + (-\frac{p}{q}*(-1)))$ of $K_n$ shown in Fig. \ref{figadd1} (this can be easily seen by moving the crossing $(-1)$ away from $T_n$ towards the rational tangle $-\frac{p}{q}$). Moreover, since $T_n$ is strongly alternating, then by Corollary 5.1 in \cite{lickorish}, $T_n$ is non-rational. This implies that the alternating link diagram $D_n$ is not a standard rational diagram. Hence, by Theorem 4.1 and Proposition 5.2 in \cite{thistlethwaite3} the link $K_n$ is not rational for any $n >0$.\\
Now we show that $K_n$ is a knot. At first, recall that $N(T_n)$ is a knot for any $n > 0$, so the numerator closure arcs of $n(T_n)$ belong the single component of $N(T_n)$. On the other hand, since $p$ is an even integer equal to $N_{-\frac{p}{q}*(-1)}$, then by Theorem 6 and Corollary 1 in \cite{kauffman}, the link diagram $n(-\frac{p}{q} * (-1))$ has two components each containing a different numerator closure arc. Let $C_t$ (respectively $C_b$) denote the component of $N(-\frac{p}{q} * (-1))$ containing the top (respectively the bottom) numerator closure arc. One can easily see that when we join the top and the bottom endpoints of $T_n$ respectively with the top and the bottom endpoints of the rational tangle $-\frac{p}{q}*(-1)$ to build the diagram $D_n$, the two components $C_t$ and $C_b$ are inserted in the single component of $N(T_n)$ as explained in Fig. \ref{figadd1}. Then $K_n$ is a knot.

\begin{figure}[H]
\centering
\includegraphics[width=0.4\linewidth]{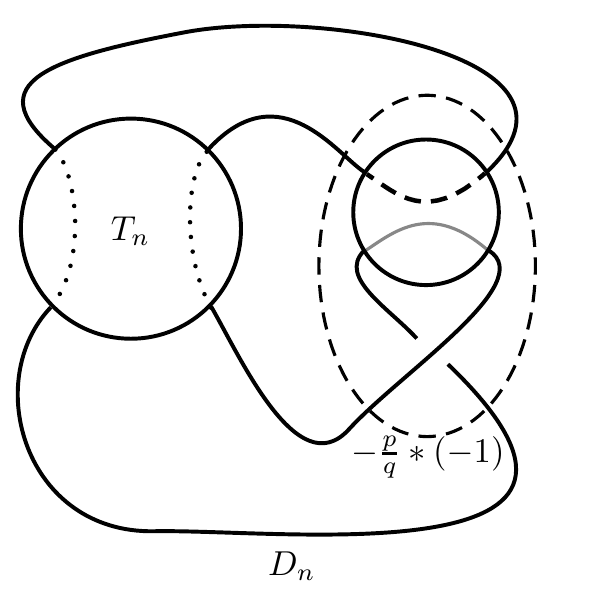} 
\caption{The knot diagram $D_n$ where the dashed (gray) arc represents a piece of $C_t$ ($C_b$).}
\label{figadd1}
\end{figure}

\end{proof}
  
  \begin{proof}[Proof of Proposition \ref{propadd}]
By Lemma \ref{lemadd}, the link $L_n$ has two components, one of which is trivial and the other is neither a torus knot nor a rational knot. Then by Lemma 2.7 in \cite{meier}, the link $L_n$ is not a Seifert link. Moreover, by Criteria 2.15 in \cite{meier}, the link $L_n$ is not a Montesinos link. Now since a link that has a Seifert fibered double branched covering is a Seifert link or a Montesinos link as mentioned in the introduction of \cite{mecchia}, then the double branched covering of the link $L_n$, which is homeomorphic to $\Sigma_2(B,\tau(T))(\alpha)$, cannot be a Seifert fibered space. The result for the manifold $\Sigma_2(B,\tau(T))(\frac{1}{\alpha})$ is deduced by Remark 5.
\end{proof}

We end this paper with two questions that are motivated by Proposition \ref{propadd}.\\

Let $T$ be a connected alternating tangle, and let $(B,\tau(T))$ be its alternating encirclement. By using the same argument as in the proof of Theorem \ref{theo1}, one can show that the filling $\Sigma_2(B,\tau(T))(\frac{k}{k+1})$, which is homeomorphic to the double branched covering of the link $L:=N(-\frac{k}{k+1} + \tau(T))$, has a non-left-orderable fundamental group for every integer $k \geq 1$. By Proposition 3.4 in \cite{abchir} and Lemma \ref{lem2}, the determinant of the link $L$ is equal to $(k+1)N_{\tau(T)} - kD_{\tau(T)} = 4(N_T + D_T)$. On the other hand, we have that $c(n(-\frac{k}{k+1} + \tau(T)))=k+1+c(\tau(T))=k+5+c(T)$. Hence, for $k > 4(N_T + D_T)-(c(T)+5)$, we will have that $c(n(-\frac{k}{k+1} + \tau(T))) > \det(L)$. By using Proposition 5.4 in \cite{abchir}, it follows that if $k > 4(N_T + D_T)-(c(T)+5)$, then the link $L$ is non-quasi-alternating. But, it may happen that the double branched covering of the link $L:=N(-\frac{k}{k+1} + \tau(T))$ is also the double branched covering of other quasi-alternating link. Consequently, the double branched covering description of the $3$-manifold $\Sigma_2(B,\tau(T))(\frac{k}{k+1})$ does not tell us wether it is an L-space or not. This fact motivates the following question.
\begin{qst}
Is the non-left-orderable $3$-manifold $\Sigma_2(B,\tau(T))(\frac{k}{k+1})$ an $L$-space for every integer $k \geq 1$?
\end{qst}
Our last discussion yields another interesting question. In 2011, Greene stated the following conjecture \cite{greene2011conway}:
\begin{conj}
  If a pair of links have homeomorphic branched double-covers, then either both are alternating or both are non-alternating.
\end{conj}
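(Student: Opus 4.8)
The plan is to reduce the conjecture to an \emph{intrinsic} characterisation of those rational homology $3$-spheres that arise as double branched covers of alternating links. Concretely, it suffices to produce a property $(\star)$ of a closed oriented $3$-manifold $Y$ with the feature that, whenever $Y\cong\Sigma_2(S^3,L)$, one has: $L$ is non-split alternating if and only if $Y$ has $(\star)$ (the split case reduces to this by a connected-sum argument). Since $(\star)$ is a property of the homeomorphism type of $Y$ alone, the conjecture is then immediate: two links with homeomorphic branched double covers cannot be separated by the alternating / non-alternating dichotomy.

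The natural candidate for $(\star)$ uses the fact that a reduced alternating diagram supplies \emph{two} essential spanning surfaces of $L$, the black and the white checkerboard surfaces, whose Goeritz (Gordon--Litherland) forms are respectively positive-definite and negative-definite. Pushing each surface into the $4$-ball and passing to the double branched cover yields a positive-definite $4$-manifold $X_+$ and a negative-definite $4$-manifold $X_-$, each with boundary $\Sigma_2(S^3,L)$. Since $\Sigma_2(S^3,L)$ is an $L$-space --- it is the double branched cover of a non-split alternating, hence quasi-alternating, link, as recalled in the Preliminaries --- one can run the Ozsv\'ath--Szab\'o correction-term obstruction to see that $X_+$ and $X_-$ are both \emph{sharp}, and their ranks are pinned down by the diagram: $b_2(X_+)+b_2(X_-)$ equals the crossing number $c(L)$ of the reduced alternating diagram, while $|\det X_\pm|=|H_1(Y)|$. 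The first step of the proof is to make this precise and to isolate the exact numerical package that will serve as $(\star)$.

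The hard direction is the converse: assume $Y=\Sigma_2(S^3,L)$ admits sharp definite fillings of both signs satisfying the numerics of $(\star)$, and produce an alternating diagram of $L$. The strategy: (i) apply Donaldson's diagonalisation theorem to $X_-$ and $X_+$, forcing their intersection lattices into the standard definite lattices, and then run a lattice-embedding analysis --- in the spirit of Greene's work on changemaker and graph lattices --- to conclude that these lattices are graph lattices of a plane graph $G$; (ii) promote $G$ to an honest definite spanning surface of $L$ in $S^3$; (iii) from the resulting positive-definite and negative-definite spanning surfaces of $L$, whose first Betti numbers add to the crossing number of some diagram, invoke a Tait / flyping-type argument to reassemble an alternating diagram of $L$, completing the proof.

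I expect step (ii) to be the crux. Steps (i) and (iii) are, respectively, lattice theory together with Heegaard Floer obstructions and classical diagrammatic topology, and both are tractable; but there is no formal reason why a definite filling of the branched cover $\Sigma_2(S^3,L)$ should descend to a spanning surface of $L$ in $S^3$, since a $4$-manifold bounding $\Sigma_2(S^3,L)$ need not be the double branched cover of a $4$-manifold bounded by $L$. Bridging this gap would require making the diagonalising fillings \emph{equivariant} for the covering involution --- for instance by an equivariant handle-trading argument --- so that they quotient to surfaces downstairs. Should this resist, a fallback is to work purely on the Heegaard Floer side: show that for alternating $L$ the full $d$-invariant package of $\Sigma_2(S^3,L)$ is so rigidly governed by a single definite Goeritz lattice, as in Ozsv\'ath--Szab\'o's computation for branched double covers of alternating links, that it cannot be matched by the branched double cover of any non-alternating link --- which turns the topological obstacle back into the lattice-realisation problem already in place from step (i).
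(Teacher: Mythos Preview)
The statement you are addressing is not a theorem of the paper; it is Greene's conjecture, quoted verbatim from \cite{greene2011conway} and left as an open problem. The paper offers no proof and does not claim one: the conjecture appears only in the closing discussion, as motivation for Question~2 on quasi-alternating links. Consequently there is no ``paper's own proof'' to compare your proposal against.

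As for the proposal itself, it is a research outline rather than a proof, and you are candid about this. The weak point is exactly where you locate it. Your property $(\star)$ lives on the $3$-manifold $Y=\Sigma_2(S^3,L)$, but the characterisation you want must be sensitive to the \emph{link} $L$, not merely to $Y$: distinct links can share a branched double cover, so a purely $3$-manifold invariant cannot, by itself, decide whether \emph{every} link with that branched double cover is alternating. Your step~(ii) --- descending a definite $4$-filling of $Y$ to a spanning surface of $L$ in $S^3$ --- is precisely the place where the argument must re-engage with $L$, and as you note there is no mechanism forcing a generic definite filling to be equivariant for the covering involution. Without that, the lattice data you extract in step~(i) floats free of $L$ and cannot feed into step~(iii). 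The fallback you sketch, matching the full $d$-invariant package, runs into the same issue: the $d$-invariants are invariants of $Y$, so at best they constrain which $Y$ can arise from alternating links, not which $L$ with a given $Y$ are alternating.

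In short: the paper does not prove the conjecture, and your proposal, while pointing in a sensible direction (definite surfaces and lattice embeddings are indeed the circle of ideas Greene and Howie used to give an intrinsic characterisation of alternating \emph{links}), does not close the gap between a property of $\Sigma_2(S^3,L)$ and a property of $L$.
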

Then we can ask the similar following question for quasi-alternating links:
\begin{qst}
  Can a closed orientable $3$-manifold be the branched double-cover of both a quasi-alternating link and a non-quasi-alternating link?
\end{qst}

\nocite{*}
\bibliographystyle{plain}

\begin{center}
  Hamid Abchir\\
  Fundamental and Applied Mathematics Laboratory\\
Hassan II University. EST.\\Casablanca.\\Morocco.\\\vspace{0.25cm}
{e-mail: hamid.abchir@univh2c.ma}\\
\vspace{1.5cm}Mohammed Sabak\\
Fundamental and Applied Mathematics Laboratory\\
Hassan II University. Ain Chock Faculty of sciences.\\Casablanca.\\Morocco.\\\vspace{0.25cm}
{e-mail: mohammed.sabak-etu@etu.univh2c.ma}
\end{center}
\end{document}